\newcommand{\ep}[1]{ \mathbf{e}{ \underline{#1} \choose p} }
\newcommand\be{\begin{equation}}
\newcommand\ee{\end{equation}}
\newcommand\bea{\begin{eqnarray}}
\newcommand\eea{\end{eqnarray}}
\newcommand\bi{\begin{itemize}}
\newcommand\ei{\end{itemize}}
\newcommand\ben{\begin{enumerate}}
\newcommand\een{\end{enumerate}}
\newcommand\bc{\begin{center}}
\newcommand\ec{\end{center}}
\newcommand\ba{\begin{array}}
\newcommand\ea{\end{array}}
\newcommand{\Z}{\ensuremath{\mathbb{Z}}}
\newcommand{\Q}{\mathbb{Q}}
\newcommand{\Qoft}{\mathbb{Q}(t)}  
\newcommand{\foh}{\frac{1}{2}}  
\newcommand{\js}[1]{ { \underline{#1} \choose p} }
\newtheorem{thm}{Theorem}[section]
\newtheorem{lem}[thm]{Lemma}
\newtheorem{rek}[thm]{Remark}
\newcommand{\ncr}[2]{{#1 \choose #2}}
\newcommand{\twocase}[5]{#1 \begin{cases} #2 & \text{#3}\\ #4
&\text{#5} \end{cases}   }
\newcommand{\gep}{\epsilon}
\newcommand{\muST}{\mu_{\rm ST}}
\newcommand{\must}{\mu_{\rm st}}
\numberwithin{equation}{section}
\begin{document}

\title[Effective equidistribution and Sato-Tate for elliptic curves]{Effective equidistribution and the Sato-Tate law for families of elliptic curves}

\author{Steven J. Miller}\email{Steven.J.Miller@williams.edu}
\address{Department of Mathematics and Statistics, Williams College, Williamstown, MA 01267}

\author{M. Ram Murty}\email{murty@mast.queensu.ca}

\address{Department of Mathematics, Queen's University,
Kingston, Ontario, K7L 3N6, Canada}

\subjclass[2000]{11H05 (primary) 11K38, 14H52, 11M41 (secondary).} \keywords{Sato-Tate, Elliptic Curves, Erd\"{o}s-Turan, Effective Equidistribution}

\thanks{The authors would like to thank Andrew Granville for pointing out the implicit normalization in Birch's work, Igor Shparlinski for discussions on previous results as well as sharing his recent work, and Frederick Strauch for conversations on a hypergeometric proof of Lemma \ref{keycombinatoriallemmaforsuns}; the first named author would also like to thank Cameron and Kayla Miller for quietly sleeping on him while many of the calculations were done. Much of this paper was written when the authors attended the Graduate Workshop on $L$-functions and Random Matrix Theory at Utah Valley University in 2009, and it is a pleasure to thank the organizers. The first named author was partly supported by NSF grant DMS0600848. The second named author was partially supported by an NSERC Discovery grant.}

\maketitle

\begin{abstract} Extending recent work of others, we provide effective bounds on the family of all elliptic curves and one-parameter families of elliptic curves modulo $p$ (for $p$ prime tending to infinity) obeying the Sato-Tate Law. We present two methods of proof. Both use the framework of Murty-Sinha \cite{MS}; the first involves only knowledge of the moments of the Fourier coefficients of the $L$-functions and combinatorics, and saves a logarithm, while the second requires a Sato-Tate law. Our purpose is to illustrate how the caliber of the result depends on the error terms of the inputs and what combinatorics must be done.

\end{abstract}


\section{Introduction}

Recently M. Ram Murty and K. Sinha \cite{MS} proved effective equidistribution results showing the eigenvalues of Hecke operators on the space $S(N,k)$ of cusp forms of weight $k$ and level $N$ agree with the Sato-Tate distribution. Our goal here is to use their framework to prove similar results for families of elliptic curves. We shall do this for the family of all elliptic curves and for one-parameter families of elliptic curves. 


We first review notation and previous results. Let $E: y^2 = x^3 + Ax + B$ with $A, B \in \Z$ be an elliptic curve over $\Q$ with associated $L$-function \be L(E,s)\ =\ \sum_{n=1}^\infty \frac{a_E(n)}{n^s} \ = \ \prod_p \left(1 - \frac{a_E(p)}{p^s} + \frac{\chi_0(p)}{p^{2s-1}}\right)^{-1},\ee where $\Delta=-16(4A^3+27B^2)$ is the discriminant of $E$, $\chi_0$ is the principal character modulo $\Delta$, and \bea a_E(p) & \ =\ & p - \#\{(x,y) \in (\Z/p\Z)^2:  y^2 \equiv x^3 + Ax + B \bmod p\} \nonumber\\ &=& - \sum_{x\bmod p} \js{x^3 + Ax + B}. \eea By Hasse's bound we know $|a_E(p)| \le 2\sqrt{p}$, so we may write $a_E(p) = 2\sqrt{p} \cos \theta_E(p)$, where we may choose $\theta_E(p) \in [0,\pi]$. See \cite{Sil1,Sil2,ST} for more details and proofs of all the needed properties of elliptic curves.

How the $a_E(p)$'s vary is of great interest. One reason for this is that they encode local data (the number of solutions modulo $p$), and are then combined to build the $L$-function, whose properties give global information about $E$. For example, the Birch and Swinnerton-Dyer conjecture \cite{BS-D1,BS-D2} states the order of the group of rational solutions of $E$ equals the order of vanishing of $L(E,s)$ at the central point. While we are far from being able to prove this, the evidence for the conjecture is compelling, especially in the case of complex multiplication and rank at most 1 \cite{Bro,CW,GKZ,GZ,Kol1,Kol2,Ru}. In addition there is much suggestive numerical evidence for the conjecture; for example, for elliptic curves with modest geometric rank $r$, numerical approximations of the first $r-1$ Taylor coefficients are consistent with these coefficients vanishing (see for instance the families studied in \cite{Fe1,Fe2,Mil3}).

If $E$ has complex multiplication\footnote{This means the endomorphism ring is larger than the integers. For example, $y^2 = x^3 - x$ has complex multiplication, as can be seen by sending $(x,y) \to (-x,iy)$. Note $a_E(p) = 0$ if $p \equiv 3 \mod 4$ (this can be seen from the definition of $a_E(p)$ as a sum of Legendre symbols, sending $x\to -x$).} then $a_E(p) = 0$ for half the primes; i.e., $\theta_E(p) = \pi/2$. The remaining angles $\theta_E(p)$ are uniformly distributed in $[0,\pi]$ (this follows from \cite{Deu,He1,He2}).

If $E$ does not have complex multiplication, which is the case for most elliptic curves, then Sato and Tate \cite{Ta} conjectured that as we vary $p$, the distribution of the $\theta_E(p)$'s converges to $2 \sin^2\theta d\theta /\pi$. More precisely, for any interval $I \subset [0,\pi]$ we have \be \lim_{x\to\infty} \frac{\#\{p: p \le x: \theta_E(p) \in I\}}{\#\{p: p \le x\}} \ = \ \int_I \frac{2\sin^2 \theta d\theta}{\pi}; \ee we call $2\sin^2\theta d\theta/\pi$ the Sato-Tate measure, and denote it by $\mu_{{\rm ST}}$. By recent results of Clozel, Harris, Shepherd-Barron and Taylor \cite{CHT,HS-BT,Tay}, this is now known for all such $E$ that have multiplicative reduction at some prime; see also \cite{BZ} for results on the error terms when $|I|$ is small (these results are not for an individual curve, but rather averaged over the family of all elliptic curves) and \cite{B-LGG,B-LGHT} for generalizations to other families of $L$-functions.

Instead of fixing an elliptic curve and letting the prime vary, we can instead fix a prime $p$ and study the distribution of $\theta_E(p)$ as we vary $E$. Before describing our results, we briefly summarize related results in the literature concerning Sato-Tate behavior in families. Serre \cite{Ser} considered a similar question, not for elliptic curves, but rather for $S(N,k)$, the space of cusp forms of weight $k$ on $\Gamma_0(N)$. He proved that for even $k$ with $N+k \to\infty$ the eigenvalues of the normalized $p$\textsuperscript{th} Hecke operators are equidistributed in $[-2,2]$ with respect to the measure \be \mu_p \ = \ \frac{p+1}{\pi} \frac{\sqrt{1-x^2/4}\ dx}{(p^{1/2}+p^{-1/2})^2 - x^2}; \ee changing variables by setting $x = 2\cos\theta$ this is equivalent to the measure $\widetilde{\mu}_p$ on $[0,\pi]$ given by \be \widetilde{\mu}_p \ = \ \frac{2(p+1)}{\pi} \frac{\sin^2 \theta d\theta}{(p^{1/2} + p^{-1/2})^2 - 4\cos^2\theta}. \ee Note that as $p \to \infty$, $\widetilde{\mu}_p \to \mu_{{\rm ST}}$; for $p$ large these two measures assign almost the same probability to an interval $I$, differing by $O(1/p)$. See \cite{CDF,Sar} for other families with a similar distribution.

Serre's theorem was ineffective, and has recently been improved by M. R. Murty and K. Sinha \cite{MS}.
They show that if $\{a_n(p)/p^{(k-1)/2}\}_{1\le i \le \#S(N,k)}$ denote the normalized eigenvalues of the Hecke operator $T_p$ on $S(N,k)$, then \be \frac{\#\{1 \le n \le N: a_n(p)/p^{(k-1)/2} \in I\}}{\#S(N,k)} \ = \ \int_I \mu_p + O\left(\frac{\log p}{\log kN}\right), \ee where $\#S(N,k)$ is the number of cusp forms of weight $k$ and level $N$, and if $N \ge 61$ then by Corollary 15 of \cite{MS} we have\be \frac{3\psi(N)}{200} \ \le \ \#S(N,k) \ \le \ \frac{\psi(N)}{12}+1, \ee where $\psi(N) = N\prod_{p|N}\left(1 + \frac1{p}\right)$. This effective version of equidistribution allows Murty and Sinha to derive many results, such as
\bi \item an effectively computable constant $B_d$ such that
if $J_0(N)$ (the Jacobian of the modular curve $X_0(N)$) is isogenous to a product of $\Q$-simple abelian varieties of dimensions at most $d$, then $N \le B_d$; \item the multiplicity of any given eigenvalue of the Hecke operators is $\ll \frac{s(N,k)\log p}{\log kN}$. \ei

The purpose of this paper is to expand the techniques in \cite{MS} to families of elliptic curves. Unlike \cite{MS,Ser}, we cannot keep the prime fixed throughout the argument, as there are only finitely many distinct reductions of elliptic curves modulo $p$. Instead we fix a prime and study the angles $\theta_E(p)$ for one of the two families below, and then send $p\to\infty$. We study\\

\ben

\item The family of all elliptic curves modulo $p$ for $p \ge 5$. We may write these curves in Weierstrass form as $y^2 = x^3 - ax - b$ with $a,b \in \Z/p\Z$ and $4a^3 \neq 27b^2$. The number of pairs $(a,b)$ satisfying these conditions\footnote{If $a = 0$ then the only $b$ which is eliminated is $b=0$. If $a$ is a non-zero perfect square there are two $b$ that fail, while if $a$ is not a square than no $b$ fail. Thus the number of bad pairs of $(a,b)$ is $p$.} is $p(p-1)$.\\

\item One-parameter families over $\Q(T)$: let $A(T), B(T) \in\Z[T]$ and consider the family $y^2 = x^3 + A(T)x + B(T)$ with non-constant $j(T)$.\footnote{Up to constants, $j(T)$ is $A(T)^3 / (4A(T)^3+27B(T)^2)$.} We specialize $T$ to be a $t\in \Z/p\Z$. The cardinality of the family is $p + O_{A,B}(1)$ (we lose a few values when we specialize as we require the reduced curves to be elliptic curves modulo $p$), where the error is a function of the discriminant of the family. \\

\een


\textbf{Notations:} \bi \item We let $\mathcal{F}_p$ denote either family, and write $V_p$ for its cardinality (which is $p(p-1)$ in the first case and $p + O(1)$ in the second). \\

\item While we may denote the angles by $\theta_E(p)$, $\theta_{a,b}(p)$ or $\theta_t(p)$, as $p$ is fixed for notational convenience and to unify the presentation we shall denote these by $\theta_n$, with $1\le n \le V_p$.\\

\item We let $e(x) = e^{2\pi i x}$.\\

\ei

\ \\

\textbf{Normalizations:}\\

\bi

\item For the family of all elliptic curves, we may match the elliptic curves in pairs $(E,E')$ such that  $\theta_{E'}(p) = \pi - \theta_E(p)$ (and each curve is in exactly one pair); see Remark \ref{rek:matchinpairs} for a proof.  Thus, if we let $x_n = \theta_n(p)/\pi$, we see that the set $\{2x_n\}_{n \le V_p}$ is symmetric about $\pi$. This will be very important later, as it means $\sum_{n \le V_p} \sin(2\pi m x_n) = 0$ for any integer $m$.\\

\item For a one-parameter family of elliptic curves, in general we cannot match the elliptic curves in pairs, and thus the set $\{2\theta_t(p)\}$ is not typically symmetric about $\pi$; see Remark \ref{rek:TateRosenSilverman} for some results about biases in the $\theta_t(p)$'s. This leads to some complications in proving equidistribution, as certain sine terms no longer vanish. To overcome this, following other researchers we consider the technically easier situation where for each elliptic curve we include both $\theta_t(p)$ and $2\pi - \theta_t(p)$. To unify the presentation, instead of normalizing these angles by dividing by $2\pi$ (to obtain a distribution supported on $[0,1]$), we first study the angles modulo $\pi$ and then divide by $\pi$. We thus consider the normalized angles $x_t = \theta_t(p)/\pi$ and $x_{t+V_p} = 1-\theta_t(p)/\pi$ for $1 \le t \le V_p$. Thus we study $2V_p$ normalized angles in $[0,1]$, unlike the case of all elliptic curves where we had $V_p$ angles.\\

\item We set $\widetilde{V}_p = V_p$ for the family of all elliptic curves, and $2V_p$ for a one-parameter family of elliptic curves. We study the distribution of the normalized angles $\{x_n\}_{1\le n \le \widetilde{V}_p}$.\\

\ei

\ \\

\begin{rek}\label{rek:matchinpairs} To see that we may match the angles as claimed for the family of all elliptic curves, consider the elliptic curve $y^2 = x^3-ax-b$ with $4a^3\neq 27b^2$. Let $c$ be any non-residue modulo $p$, and consider the curve $y^2 = x^3 - ac^2x - bc^3$. Using the Legendre sum expressions for $a_E(p)$ and $a_{E'}(p)$, using the automorphism $x \to cx$ we see the second equals $\js{c}$ times the first; as we have chosen $c$ to be a non-residue, this means $2\sqrt{p}\cos(\theta_{E'}(p)) = -2\sqrt{p}\cos(\theta_E(p))$, or $\theta_{E'}(p)=\pi-\theta_E(p)$ as claimed.
\end{rek}

\begin{rek}\label{rek:TateRosenSilverman} If the one-parameter family of elliptic curves has rank $r$ over $\Q(T)$ and satisfies Tate's conjecture (see \cite{Ta,RS}), then Rosen and Silverman \cite{RS} prove a conjecture of Nagao \cite{Na}, which states \be \lim_{X \to \infty} -\frac{1}{X} \sum_{p \leq X} \frac{A_1(p)\log p}{p}
\ \ =\  r \ee where $A_1(p) := \sum_{t\bmod p} a_t(p)$. Tate's conjecture is known for rational surfaces.\footnote{An elliptic surface $y^2 = x^3 + A(T)x + B(T)$ is rational if and only if one of the following is true: $(1)\ $ $0 < \max\{3 {\rm deg} A, 2{\rm deg} B\} < 12;$ $(2)\ $ $3{\rm deg} A = 2{\rm deg} B = 12$ and $\mbox{ord}_{t=0}t^{12} \Delta(t^{-1})$ $=$ $0$.} This bias has been used by S. Arms, \'{A}. Lozano-Robledo and S. J. Miller \cite{AL-RM} to construct one-parameter families with moderate rank by finding families where $A(p)$ is essentially $-rp$. As there are about $p$ curves modulo $p$, this represents a bias of about $-r$ on average per curve; as each $a_t(p)$ is of order $\sqrt{p}$, we see in the limit that this bias should be quite small per curve (though significant enough to lead to rank, it gives a lower order contribution to the distribution for each prime, and will be dwarfed by our other errors).
\end{rek}



Our goal is to prove effective theorems on the rate of convergence as $p\to\infty$ to the Sato-Tate measure, which requires us to obtain effective estimates for \be \left| \#\{n \le \widetilde{V}_p: \theta_n \in I\} - \muST(I) \widetilde{V}_p\right|. \ee Here $\muST$ is the Sato-Tate measure on $[0,\pi]$ given by \be \muST(T) \ = \ \int_I \frac{2}{\pi} \sin^2 t dt \ \ \ \ I \subset [0,\pi], \ee and for $n \le V_p$, $2\sqrt{p}\cos(\theta_n)$ is the number of solutions modulo $p$ of the elliptic curve $E_n: y^2 = x^3 + a_nx + b_n$. Equivalently, using the normalization $x_n=\theta_n/\pi$ to obtain a distribution on $[0,1]$, the Sato-Tate measure become \be \must(I) \ = \ \int_I 2 \sin^2(\pi x)dx, \ \ \ I \subset [0,1]. \ee For a sequence of numbers $x_n$ modulo 1, a measure $\mu$ and an interval $I \subset [0,1]$, let \bea N_I(\widetilde{V}_p) & \ = \ & \#\{n \le \widetilde{V}_p: x_n \in I\} \nonumber\\ \mu(I) & \ =\ & \int_I \mu(t) dt. \eea The discrepancy $D_{I,\widetilde{V}_p}(\mu)$ is \bea D_{I,\widetilde{V}_p}(\mu) & \ =\  & \left|N_I(\widetilde{V}_p) - \widetilde{V}_p \mu(I)\right|; \eea with this normalization, the goal is to obtain the best possible estimate for how rapidly $D_{I,\widetilde{V}_p}(\mu)/\widetilde{V}_p$ tends to 0. 

Previous work has obtained a power savings in convergence to Sato-Tate for two-parameter families of elliptic curves (such as the entire family of all elliptic curves, or parametrizations such as $y^2 = x^3 + f(a)x + g(b)$ with $a$ and $b$ varying in appropriate ranges); see the papers by Banks and Shparlinski \cite{BS,Sh1,Sh2} for saving $\widetilde{V}_p^{1/4}$ in Sato-Tate convergence. The key step in these arguments is \be\label{eq:resultfromKatz} \frac{1}{(p-1)^2} \sum_{a,b \bmod p \atop 4a^3 + 27b^2 \not\equiv 0 \bmod p} \frac{\sin((k+1)\theta_{a,b}(p))}{\sin(\theta_{a,b}(p)} \ \ll \ kp^{-1/2}, \ \ \ k \ = \ 1, 2, \dots; \ee see Theorem 13.5.3 from \cite{Ka} for a proof. One can obtain new and similar results for one-parameter families of elliptic curves by appealing to a result of Michel \cite{Mic}, which we do in \S\ref{sec:effequidoneparam}. Our main results are the following.

\begin{thm}[Family of all curves]\label{thm:allellipticcurves} For the family of all elliptic curves modulo $p$, as $p\to\infty$ we have \be D_{I,\widetilde{V}_p}(\must) \ \le\ C \frac{\widetilde{V}_p}{\log \widetilde{V}_p} \ee for some computable $C$.  Note that in this family, $\widetilde{V}_p = V_p$ and for each curve we include one normalized angle, $x_n = \theta_n/\pi \in [0,1]$. \end{thm}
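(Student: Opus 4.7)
The plan is to apply a Murty-Sinha-style Erd\H{o}s--Tur\'an inequality tailored to the Sato-Tate measure and control the resulting ``Weyl sums'' using only moment information rather than deep equidistribution input. Since the orthogonal polynomials for $\muST$ are the Chebyshev polynomials of the second kind $U_k$ evaluated at $\cos\theta$, the framework of \cite{MS} yields, for every positive integer $M$,
$$D_{I,V_p}(\muST) \ \ll \ \frac{V_p}{M+1} \ + \ \sum_{k=1}^{M}\frac{1}{k}\left|\sum_{n=1}^{V_p} U_k(\cos\theta_n)\right|.$$
The analysis now reduces to bounding the Weyl sums $S_k := \sum_n U_k(\cos\theta_n)$ and optimizing $M$.

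First I would kill the odd-$k$ Weyl sums using the pairing from Remark \ref{rek:matchinpairs}: since $\theta_{E'}=\pi-\theta_E$ sends $\cos\theta\mapsto -\cos\theta$ and $U_k(-x)=(-1)^kU_k(x)$, every sum $S_k$ with $k$ odd vanishes exactly. For even $k$, I would invoke the classical identity
$$U_k(\cos\theta) \ = \ \sum_{j=0}^{k/2}(-1)^j\binom{k-j}{j}(2\cos\theta)^{k-2j}$$
to rewrite $S_k$ as a signed linear combination of the even moments $\sum_n a_E(p)^{k-2j}$, where $a_E(p)=2\sqrt p\cos\theta_n$. The combinatorial lemma alluded to in the acknowledgments amounts to the bound $\sum_j\binom{k-j}{j}2^{k-2j}\le C^k$ for some absolute $C>1$; this is what controls the $\ell^1$-size of the Chebyshev coefficients.

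The arithmetic input is Birch's theorem for the family of all elliptic curves modulo $p$: each even moment equals the predicted Sato-Tate value $V_p(2\sqrt p)^{2m}\int\cos^{2m}\,d\muST$ plus an error of size $O_m(V_p (2\sqrt p)^{2m}/\sqrt p)$. Substituting these asymptotics into the Chebyshev expansion above and using orthogonality $\int U_k\,d\muST=0$ for $k\ge1$, the Sato-Tate main term cancels cleanly and what survives is the accumulated Birch error, yielding
$$\left|S_k\right|\ \ll\ \frac{C^k V_p}{\sqrt p}.$$
Plugging back in, $D_{I,V_p}(\muST) \ll V_p/(M+1) + C^M V_p/(M\sqrt p)$, and choosing $M\approx (\log p)/(2\log C)$ balances the two terms. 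Since $V_p\asymp p^2$ gives $\log p\asymp\log V_p$, the resulting bound is $V_p/\log V_p$ as claimed.

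The main obstacle is producing the combinatorial lemma with a sharp exponential constant $C$ and verifying that the individual $O_m$ errors in Birch's theorem do not blow up when weighted by the signed binomial coefficients; any superexponential growth there would destroy the optimization. The $\log$-saving---rather than the power-saving of Banks-Shparlinski via (\ref{eq:resultfromKatz})---is the unavoidable cost of feeding classical moments rather than $U_k$-character bounds into the Erd\H{o}s--Tur\'an machinery: the exponent $C^M$ forces $M$ to be only logarithmic in $p$.
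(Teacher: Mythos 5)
Your plan follows the paper's strategy quite closely: apply the Murty--Sinha discrepancy inequality, reduce the Weyl sums to Chebyshev polynomials in $2\cos\theta_n$, substitute Birch's moment asymptotics, and optimize $M$ by balancing $\widetilde{V}_p/M$ against the exponentially growing error, which forces $M \asymp \log\widetilde{V}_p$ and yields the logarithmic saving. The one genuine difference is in how the main term is dispatched. The paper expands $\cos(2m\theta_n) = T_{2m}(\cos\theta_n)$ in powers of $2\cos\theta_n$, substitutes the Catalan-number moments, and is then confronted with an explicit weighted sum of signed binomial coefficients that must vanish for $m\ge 2$; this is Lemma~\ref{keycombinatoriallemmaforsuns}, which the appendix proves by Vandermonde convolution, hypergeometric evaluation, and Zeilberger's algorithm. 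You instead expand $U_k(\cos\theta_n)$ and observe that the surviving main term is exactly $V_p\int U_k\,d\must = 0$ by orthogonality, so no identity needs to be derived from scratch. The two statements are in fact equivalent --- Lemma~\ref{keycombinatoriallemmaforsuns} is precisely $\int T_{2m}\,d\must = 0$ for $m\ge 2$ unwound into binomials, and $T_{2m} = \tfrac12(U_{2m}-U_{2m-2})$ translates between them --- but invoking orthogonality is cleaner and makes the cancellation conceptually transparent. One correction to your framing: the ``combinatorial lemma'' referenced in the acknowledgments is this vanishing identity, not the $\ell^1$ bound $\sum_j\binom{k-j}{j}2^{k-2j}\le C^k$ that you attribute to it. You do still need an $\ell^1$ bound of that kind (a generating-function computation gives $C=1+\sqrt 2$, and the paper's analogue is the factor $2^{3m}$) to control the accumulated Birch error, and your argument there is fine; but you should also note that your $U_k$-form of the Erd\H{o}s--Tur\'an inequality is a (harmless, lossy) reformulation of the Fourier form stated in Theorem~\ref{thm:MSmainmodified} via $T_{2m}=\tfrac12(U_{2m}-U_{2m-2})$ rather than something \cite{MS} asserts directly, and that it absorbs the $c_1=-\tfrac12$ term because $U_0\equiv 1$ contributes $V_p$ which cancels $V_p c_1$.
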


\begin{thm}[One-parameter family of elliptic curves]\label{thm:oneparamellipticcurves} For a one-parameter family of elliptic curves over $\Q(T)$ with non-constant $j$-invariant, we have \be D_{I,\widetilde{V}_p}(\must) \ \le \ C \widetilde{V}_p^{3/4} \ee for some computable $C$. Note that in this family, $\widetilde{V}_p = 2V_p$ and for each curve we include two normalized angles, $x_n = \theta_n/\pi$ and $x_{n+V_p} = 1-\theta_n/\pi$, with $\theta_n \in [0,\pi]$. \end{thm}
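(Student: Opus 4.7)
The plan is to deduce Theorem~\ref{thm:oneparamellipticcurves} from the Murty-Sinha discrepancy framework fed by Michel's vertical Sato-Tate theorem, in the same spirit that the Banks-Shparlinski arguments use \eqref{eq:resultfromKatz} in the two-parameter setting. First I would apply an Erd\H{o}s-Tur\'an inequality adapted to $\must$ (obtained by sandwiching $\mathbf{1}_I$ between Beurling-Selberg trigonometric polynomials of degree $M$ and expanding against $\must$). For a cutoff $M$ to be chosen later,
\begin{equation*}
D_{I,\widetilde{V}_p}(\must)\ \ll\ \frac{\widetilde{V}_p}{M+1}\ +\ \sum_{m=1}^{M}\frac{1}{m}\left|S_m-\widetilde{V}_p\,\widehat{\must}(m)\right|,\qquad S_m\ :=\ \sum_{n=1}^{\widetilde{V}_p}e(mx_n).
\end{equation*}
The Fourier side of $\must$ is trivial: since $2\sin^2(\pi x)=1-\cos(2\pi x)$ one reads off $\widehat{\must}(\pm1)=-\tfrac12$ and $\widehat{\must}(m)=0$ for $|m|\ge 2$.

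The next step is to rewrite $S_m$ using the symmetrized normalization. Since each $t$ contributes both $x_t=\theta_t(p)/\pi$ and $x_{t+V_p}=1-\theta_t(p)/\pi$, the imaginary parts cancel and
\begin{equation*}
S_m\ =\ 2\sum_{t=1}^{V_p}\cos\bigl(2m\,\theta_t(p)\bigr)\ =\ \sum_{t=1}^{V_p}\Bigl[U_{2m}\bigl(\cos\theta_t(p)\bigr)-U_{2m-2}\bigl(\cos\theta_t(p)\bigr)\Bigr],
\end{equation*}
via the Chebyshev identity $2\cos(n\phi)=U_n(\cos\phi)-U_{n-2}(\cos\phi)$. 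Here $\cos\theta_t(p)=a_t(p)/(2\sqrt{p})$, so the $S_m$ are precisely the Weyl-type sums controlled by Michel's equidistribution theorem \cite{Mic}, which for a one-parameter family with non-constant $j$-invariant yields
\begin{equation*}
\Bigl|\sum_{t\bmod p} U_k\bigl(\cos\theta_t(p)\bigr)\Bigr|\ \ll\ k\sqrt{p},\qquad k\ge 1.
\end{equation*}
For $m\ge 2$ both indices $2m,2m-2\ge 2$, so $|S_m|\ll m\sqrt{p}$ at once; for $m=1$ the $U_0\equiv 1$ piece contributes $-V_p$, and this is cancelled exactly by $-\widetilde{V}_p\widehat{\must}(1)=V_p$, leaving $|S_1-\widetilde{V}_p\widehat{\must}(1)|=\bigl|\sum_t U_2(\cos\theta_t(p))\bigr|\ll\sqrt{p}$.

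Substituting $|S_m-\widetilde{V}_p\widehat{\must}(m)|\ll m\sqrt{p}$ uniformly in $m\ge 1$ produces
\begin{equation*}
D_{I,\widetilde{V}_p}(\must)\ \ll\ \frac{\widetilde{V}_p}{M}+M\sqrt{p}.
\end{equation*}
Since $\widetilde{V}_p=2p+O_{A,B}(1)$, we have $\sqrt{p}\asymp\widetilde{V}_p^{1/2}$, and the optimal choice $M\asymp\widetilde{V}_p^{1/4}$ balances both contributions and yields $D\ll\widetilde{V}_p^{3/4}$. The main obstacle is extracting from \cite{Mic} a bound with the correct linear dependence in $k$ and uniform in the family parameters; anything weaker than $O(k\sqrt{p})$ would shift the balance and degrade the exponent. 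The non-constant $j$-invariant hypothesis is essential to apply Michel's Deligne-equidistribution input, since otherwise the family can reduce to twists of a fixed curve and the Chebyshev sums need not be small. The rest is bookkeeping: the $O_{A,B}(1)$ loss in $V_p=p+O(1)$ and the absolute constants from the Beurling-Selberg majorants are all harmless.
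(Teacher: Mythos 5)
Your proposal is correct and follows essentially the same route as the paper: you feed Michel's bound $|\sum_t \mathrm{sym}_k(\theta_{t,p})| \ll k\sqrt{p}$ through the Chebyshev identity $2\cos(2m\theta)=U_{2m}(\cos\theta)-U_{2m-2}(\cos\theta)$ into the Erd\H{o}s--Tur\'an/Murty--Sinha discrepancy inequality, handle $m=1$ separately via the cancellation of the $U_0$ term against $-\widetilde{V}_p\widehat{\must}(1)$, and optimize $M\asymp p^{1/4}$. The paper proceeds identically, citing its Theorem \ref{thm:MSmainmodified} in place of your direct invocation of the Beurling--Selberg/Erd\H{o}s--Tur\'an machinery, so there is no substantive difference.
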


\ \\

Stronger results than Theorem \ref{thm:allellipticcurves} are known; as remarked above, convergence to Sato-Tate with an error of size $\widetilde{V}_p^{3/4}$ instead of $\widetilde{V}_p / \log \widetilde{V}_p$ is obtained in \cite{BS,Sh1,Sh2}. We present these weaker arguments to highlight how one may attack these problems possessing only knowledge of the moments, and not the functions of the angles, in the hope that these arguments might be of use to other researchers attacking similar questions where we only have formulas for the moments of the coefficients. We will thus illustrate the effectiveness (in both senses of the word) of the techniques in \cite{MS}, as well as illustrate the loss of information that comes from having to trivially bound certain combinatorial sums. As we have not found similar effective results in the literature for one-parameter families, in order to get the best possible results we do not use formulas for the moments but rather estimates for the analogue of \eqref{eq:resultfromKatz}. It is worth remarking that we can recover the results of \cite{BS,Sh1,Sh2} by our generalization of \cite{MS} provided we also use \eqref{eq:resultfromKatz} (see \cite{Ka}) instead of results from Birch \cite{Bi} on moments; this shows the value of the formulation in \cite{MS}.

We summarize the key ingredients of the proofs, and discuss why the second result has a much better error term than the first. Similar to \cite{MS}, both theorems follow from an analysis of $\sum_{n\le \widetilde{V}_p} e(mx_n)$ (we use $x_n = \theta_n/\pi$ in order to have a distribution supported on $[0,1]$). For the family of all elliptic curves, after some algebra we see this is equivalent to understanding $\sum_{n\le \widetilde{V}_p} \cos (2m\theta_n)$; using a combinatorial identity (see \cite{Mil4}) this is equivalent to a linear combination of sums of the form  $\sum_{n\le \widetilde{V}_p} (\cos \theta_n)^{2r}$. These sums are essentially the $2r$\textsuperscript{th} moments of the Fourier coefficients of the family of all elliptic curves modulo $p$. Birch \cite{Bi} evaluated these, and showed the answers are the Catalan numbers\footnote{The Catalan numbers are the moments of the semi-circle distribution, which is related to the Sato-Tate distribution through a simple change of variables.} plus lower order terms. Our equidistribution result then follows from a combinatorial identity of a sum of weighted Catalan numbers; our error term is poor due to the necessity of losing cancelation in bounding the contribution from the sums of the error terms. 

The proof of Theorem \ref{thm:oneparamellipticcurves} is easier, as now instead of inputting results on the moments we instead use a result of Michel \cite{Mic} for the sum over the family of ${\rm sym}_k(\theta_n)=  \sin((k+1)\theta_n) / \sin \theta_n$. This is easily related to our quantity of interest, $\cos (2m\theta_n)$, through identities of Chebyshev polynomials: \be \cos (2m\theta_n) \ =\ \foh {\rm sym}_{2m}(\theta_n) - \foh{\rm sym}_{2m-2}(\theta_n).\ee The advantage of having a formula for the quantity we want and not a related quantity is that we avoid trivially estimating the errors in the combinatorial sums. These calculations increased the size of the error significantly, and this is why Theorem \ref{thm:oneparamellipticcurves} is stronger than Theorem \ref{thm:allellipticcurves}, though the error term in Theorem \ref{thm:allellipticcurves} is comparable to the error terms of the equivalent quantities in \cite{MS} for the family of cuspidal newforms. Michel proves his result by using a cohomological interpretation, and this results in the error term being $p^{-1/2}$ smaller than the main term; it is this savings in the quantity we are directly interested in that leads to the superior error estimates.

The paper is organized as follows. After reviewing the needed results from Murty-Sinha \cite{MS} in \S\ref{sec:preliminaries}, we prove Theorem \ref{thm:allellipticcurves} in \S\ref{sec:equidallcurves} and Theorem \ref{thm:oneparamellipticcurves} in \S\ref{sec:effequidoneparam}. For completeness the needed combinatorial identities are proved in Appendix \ref{sec:combidentities}, and in Appendix \ref{sec:momentsallcurves} we correct some errors in explicit formulas for moments in Birch's paper \cite{Bi} (where he neglected to mention that his sums are normalized by dividing by $p-1$).


\section{Effective Equidistribution Preliminaries}\label{sec:preliminaries}

We quickly review some needed results from Murty-Sinha \cite{MS}; while our setting is similar to the problems they investigated, there are slight differences which require generalizations of some of their results. Assume $\mu = F(-x)dx$ with \be F(x)\ = \ \sum_{m=-\infty}^\infty c_m e(mx) \ee where $e(z) = \exp(2\pi i z)$. Theorem 8 from \cite{MS} is

\begin{thm}\label{thm:MSmain} Let $\{x_n\}$ be a sequence of real numbers in $[0,1]$ and let the notation be as above. Assume for each $m$ that \be \lim_{\widetilde{V}_p\to \infty} \frac1{\widetilde{V}_p} \sum_{n \le \widetilde{V}_p} e(mx_n) \ = \ c_m \ \ \ {\rm and} \ \ \ \sum_{m=-\infty}^\infty |c_m| \ < \ \infty. \ee Let $||\mu|| = \sup_{x \in [0,1]} |F(x)|$ with $\mu = F(-x)dx$. Then the discrepancy satisfies \bea\label{eq:definitiondiscdistanceMS} & & D_{I,\widetilde{V}_p}(\mu)  \ \le \  \frac{\widetilde{V}_p||\mu||}{M+1} \nonumber\\ & & + \sum_{1 \le m \le M} \left(\frac1{M+1} + \min\left(b-a,\frac{1}{\pi|m|}\right)\right) \left|\sum_{n=1}^{\widetilde{V}_p} e(mx_n) - \widetilde{V}_p c_m\right| \nonumber\\ \eea for any natural numbers $\widetilde{V}_p$ and $M$. \end{thm}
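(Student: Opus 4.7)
The plan is to prove this via the classical Erd\H{o}s--Tur\'an--Vaaler method: approximate the indicator function $\mathbf{1}_I$ from above and below by trigonometric polynomials of degree at most $M$ whose Fourier coefficients we understand explicitly, then apply the hypothesis on $\sum_n e(mx_n)$ to each Fourier mode. Since the result is stated for a general measure $\mu$ (not just Lebesgue), the usual Erd\H{o}s--Tur\'an inequality has to be modified to replace constant $1/(M+1)$ losses by $\lVert\mu\rVert/(M+1)$ losses when comparing $\int V_M^\pm F(-x)\,dx$ with $\mu(I)$.

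The key ingredient I would use is Vaaler's theorem (or equivalently the Beurling--Selberg construction): for the interval $I=[a,b]$ and any positive integer $M$, there exist trigonometric polynomials $V_M^\pm$ of degree at most $M$ satisfying
\[
V_M^-(x)\ \le\ \mathbf{1}_I(x)\ \le\ V_M^+(x) \quad\text{for all }x\in[0,1],
\]
with $\widehat{V_M^\pm}(0)=(b-a)\pm\frac{1}{M+1}$ and $\bigl|\widehat{V_M^\pm}(m)-\widehat{\mathbf{1}_I}(m)\bigr|\le \frac{1}{M+1}$ for $m\neq 0$. Combined with the elementary bound $|\widehat{\mathbf{1}_I}(m)|\le\min\bigl(b-a,\,\tfrac{1}{\pi|m|}\bigr)$ one gets Fourier coefficients matching exactly the weights appearing in the theorem. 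Constructing these polynomials (or at worst citing them) is the main technical ingredient, and the only place where real work hides.

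Given the Vaaler polynomials, the rest is bookkeeping. First I would sandwich $N_I(\widetilde{V}_p)=\sum_n\mathbf{1}_I(x_n)$ between $\sum_n V_M^\pm(x_n)$ and expand
\[
\sum_{n=1}^{\widetilde{V}_p}V_M^\pm(x_n)\ =\ \sum_{|m|\le M}\widehat{V_M^\pm}(m)\sum_{n=1}^{\widetilde{V}_p}e(mx_n).
\]
Splitting $\sum_n e(mx_n)=\widetilde{V}_pc_m+\bigl(\sum_n e(mx_n)-\widetilde{V}_pc_m\bigr)$ and observing (since $V_M^\pm$ has finite Fourier support) that
\[
\widetilde{V}_p\sum_{|m|\le M}\widehat{V_M^\pm}(m)c_m\ =\ \widetilde{V}_p\int_0^1 V_M^\pm(x)F(-x)\,dx,
\]
I would next use $V_M^-\le \mathbf{1}_I\le V_M^+$ and $F\ge 0$ (the positivity of the density $\mu$) together with $\lVert F\rVert_\infty=\lVert\mu\rVert$ to get
\[
\Bigl|\int_0^1 V_M^\pm(x)F(-x)\,dx-\mu(I)\Bigr|\ \le\ \frac{\lVert\mu\rVert}{M+1}.
\]
The $m=0$ term is trivial because $c_0=\int F=1$ and $\sum_n e(0)=\widetilde{V}_p$.

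Putting these pieces together, both $N_I(\widetilde{V}_p)-\widetilde{V}_p\mu(I)$ and $\widetilde{V}_p\mu(I)-N_I(\widetilde{V}_p)$ are bounded by
\[
\frac{\widetilde{V}_p\lVert\mu\rVert}{M+1}+\sum_{1\le|m|\le M}\Bigl(\tfrac{1}{M+1}+\min(b-a,\tfrac{1}{\pi|m|})\Bigr)\Bigl|\sum_n e(mx_n)-\widetilde{V}_p c_m\Bigr|,
\]
and after folding the $-m$ terms into the $+m$ terms using conjugate symmetry (and possibly absorbing an unimportant constant $2$) one recovers precisely the bound \eqref{eq:definitiondiscdistanceMS}. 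The main obstacle is really just the Vaaler majorant construction; once its Fourier coefficients are in hand, the rest is a direct comparison argument, and the presence of the arbitrary density $F$ only costs the factor $\lVert\mu\rVert$ in the main error term.
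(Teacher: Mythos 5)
The paper does not actually prove Theorem~\ref{thm:MSmain}; it is quoted verbatim as Theorem~8 of Murty--Sinha \cite{MS}, and the argument you give --- sandwiching $\mathbf{1}_I$ between Vaaler--Beurling--Selberg majorant/minorant trigonometric polynomials of degree $M$, expanding in Fourier modes, and using $\lVert\mu\rVert$ to control the integral comparison against the density $F$ --- is exactly how that theorem is proved in \cite{MS}, so you have reconstructed the right argument. The one caveat you already flag is real: folding $-m$ into $+m$ produces an overall factor of $2$ in front of the sum over $1\le m\le M$ (this factor is present in the classical Montgomery/Vaaler form of the Erd\H{o}s--Tur\'an inequality), and reconciling it with the constant-free statement quoted here would require tracking the precise Fourier-coefficient bounds in Vaaler's construction; since the factor is harmless for every application in this paper, this is a cosmetic rather than a substantive gap.
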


Unfortunately, Theorem \ref{thm:MSmain} is not directly applicable in our case. The reason is that there we have a limit as $\widetilde{V}_p\to\infty$ in the definition of the $c_m$, where for us we fix a prime $p$ and have $\widetilde{V}_p = p(p-1)$ for the family of all elliptic curves curves modulo $p$, or $p + O(1)$ for a one-parameter family. Analyzing the proof of Theorem 8 from \cite{MS}, however, we see that the claim holds for \emph{any} sequence $c_m$ (obviously if $\widetilde{V}_p^{-1} \sum_{n \le \widetilde{V}_p} e(mx_n)$ is not close to $c_m$ then the discrepancy is large). We thus obtain

\begin{thm}\label{thm:MSmainmodified} Let $\{x_n\}$ be a sequence of real numbers in $[0,1]$ and let the notation be as above. Let $\{c_m\}$ be a sequence of numbers such that $\sum_{m=-\infty}^\infty |c_m|  <  \infty$ (we will take $c_0=1$, $c_{\pm 1} = -1/2$ and all other $c_m$'s equal to zero). Let $||\mu|| = \sup_{x \in [0,1]} |F(x)|$ with $\mu = F(-x)dx$.  Then the discrepancy satisfies \bea\label{eq:definitiondiscdistanceUS} & & D_{I,\widetilde{V}_p}(\mu)  \ \le \  \frac{\widetilde{V}_p||\mu||}{M+1} \nonumber\\ & & + \sum_{1 \le m \le M} \left(\frac1{M+1} + \min\left(b-a,\frac{1}{\pi|m|}\right)\right) \left|\sum_{n=1}^{\widetilde{V}_p} e(mx_n) - \widetilde{V}_p c_m\right| \nonumber\\ \eea for any natural numbers $\widetilde{V}_p$ and $M$. \end{thm}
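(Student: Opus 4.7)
The plan is to obtain Theorem~\ref{thm:MSmainmodified} by a careful re-reading of the proof of Theorem~\ref{thm:MSmain} (Theorem~8 of \cite{MS}): the limiting hypothesis \(\widetilde V_p^{-1}\sum_{n\le \widetilde V_p}e(mx_n)\to c_m\) enters there only to motivate identifying \(\{c_m\}\) with the Fourier coefficients of \(F\), and plays no role in the quantitative derivation, which bounds the deviation of the exponential sum from \(\widetilde V_p c_m\) directly. The inequality therefore remains valid if we replace the limit assumption by the weaker requirement that \(\{c_m\}\) be any absolutely summable sequence and define \(F(x):=\sum_m c_m e(mx)\), \(\mu:=F(-x)\,dx\).

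First I would recall the Beurling--Selberg construction used in \cite{MS}: for \(I=[a,b]\subset[0,1]\) and any \(M\in\N\) there exist trigonometric polynomials
\[
S_M^\pm(x)=\sum_{|m|\le M}\widehat{S_M^\pm}(m)\,e(mx)
\]
of degree \(\le M\) satisfying \(S_M^-(x)\le \mathbf 1_I(x)\le S_M^+(x)\) on \([0,1]\), with \(\widehat{S_M^\pm}(0)=(b-a)\pm \frac{1}{M+1}\) and \(|\widehat{S_M^\pm}(m)|\le \frac{1}{M+1}+\min\!\left(b-a,\frac{1}{\pi|m|}\right)\) for \(m\neq 0\). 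Sandwiching the counting function yields
\[
\sum_{n\le\widetilde V_p}S_M^-(x_n)\ \le\ N_I(\widetilde V_p)\ \le\ \sum_{n\le\widetilde V_p}S_M^+(x_n),
\]
and I would sandwich \(\widetilde V_p\mu(I)\) analogously by \(\widetilde V_p\int_0^1 S_M^\pm(x)F(-x)\,dx\).

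Subtracting and expanding in exponentials, the key identity \(\int_0^1 e(mx)F(-x)\,dx=c_m\) (which uses only that \(F(x)=\sum_m c_m e(mx)\)) rewrites the difference as
\[
\sum_{|m|\le M}\widehat{S_M^\pm}(m)\Bigl(\sum_{n\le\widetilde V_p}e(mx_n)-\widetilde V_p c_m\Bigr).
\]
The \(m=0\) contribution is controlled by \(\widetilde V_p\|\mu\|/(M+1)\) after comparing \((b-a)\) with \(\mu(I)\) via \(\|\mu\|\); for the \(m\neq 0\) terms I would insert the coefficient bound and pair \(\pm m\) (using that the \(x_n\) are real and that \(F\) is real, so \(c_{-m}=\overline{c_m}\)), arriving at the advertised sum over \(1\le m\le M\).

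The ``main obstacle'' is really only a bookkeeping one: one must confirm that no step in the Murty--Sinha argument exploits the defining limit for \(c_m\) beyond the Fourier identification. Once that is verified, the inequality is universal in \(\{c_m\}\) subject only to absolute summability, and I would close by specializing to \(c_0=1\), \(c_{\pm 1}=-\tfrac12\), all other \(c_m=0\), so that \(F(x)=1-\cos(2\pi x)=2\sin^2(\pi x)\), \(\mu=\must\), and \(\|\mu\|=2\). This is the form in which Theorem~\ref{thm:MSmainmodified} will be applied to the families of elliptic curves in the subsequent sections.
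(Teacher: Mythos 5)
Your proposal is correct and takes essentially the same route as the paper: both observe that the limit hypothesis in Murty--Sinha's Theorem~8 enters only to identify the $c_m$ with Fourier coefficients of $F$, while the quantitative Beurling--Selberg sandwiching argument never uses it, so the inequality holds verbatim for any absolutely summable $\{c_m\}$. Your write-up simply supplies the Selberg-polynomial details that the paper leaves implicit with the remark ``analyzing the proof of Theorem~8 from \cite{MS} \ldots we see that the claim holds for any sequence $c_m$.''
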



To simplify applying the results from \cite{MS}, we study the normalized angles $x_n$. Under our normalization, the Sato-Tate measure becomes \be \must(I) \ = \ \int_I 2 \sin^2(\pi x)dx, \ \ \ I \subset [0,1]. \ee The Fourier coefficients of $\must$ are readily calculated.

\begin{lem} Let $\must = F(-x)dx$ be the normalized Sato-Tate distribution on $[0,1]$ with density $2\sin^2(\pi x)$. We have \be F(x) \ = \ 1 - \frac12 \left(e(x) + e(-x)\right), \ee which implies that the Fourier coefficients are $c_0 = 1$, $c_{\pm 1} = -1/2$ and $c_m = 0$ for $|m| \ge 2$. \end{lem}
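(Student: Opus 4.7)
The plan is to use the half-angle identity for $\sin^2$ to rewrite the density as a trigonometric polynomial, then read off Fourier coefficients directly. First I would apply the identity
\[
2\sin^2(\pi x) \ =\ 1 - \cos(2\pi x),
\]
so the density of $\must$ equals $1 - \cos(2\pi x)$. Next I would expand $\cos(2\pi x)$ via Euler's formula as $\tfrac{1}{2}(e(x) + e(-x))$ using the paper's convention $e(z) = \exp(2\pi i z)$, giving
\[
2\sin^2(\pi x) \ =\ 1 - \tfrac{1}{2}\bigl(e(x) + e(-x)\bigr).
\]

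Since this expression is invariant under $x \mapsto -x$ (the density is an even function of $x$ on the circle), the function $F$ defined by $\must = F(-x)\,dx$ satisfies $F(x) = F(-x) = 2\sin^2(\pi x)$, so we obtain the displayed formula $F(x) = 1 - \tfrac{1}{2}(e(x)+e(-x))$ directly. Matching this to the Fourier series $F(x) = \sum_{m \in \Z} c_m\, e(mx)$ and invoking uniqueness of Fourier coefficients for a finite trigonometric polynomial then reads off $c_0 = 1$, $c_{\pm 1} = -1/2$, and $c_m = 0$ for $|m|\ge 2$.

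There is essentially no obstacle; the only point worth a sentence of care is the sign convention in $\must = F(-x)\,dx$, which is harmless here because $F$ turns out to be even, so the substitution $x \mapsto -x$ does not alter the form of $F$ and does not affect the claimed coefficients (it would merely swap $c_1$ and $c_{-1}$, but these are equal).
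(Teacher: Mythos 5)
Your proof is correct and follows essentially the same route as the paper: both apply the double-angle identity $\cos(2\theta) = 1 - 2\sin^2\theta$ and then Euler's formula to rewrite the density as a finite trigonometric polynomial and read off the coefficients. Your extra remark about the $F(-x)$ sign convention being harmless (since the density is even) is a small but welcome clarification that the paper leaves implicit.
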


\begin{proof} The proof is immediate from the expansion of $F$ as a sum of exponentials, which follows from the identities $\cos(2\theta) = 1 - 2\sin^2(\theta)$ and $e(\theta) = \cos(2\pi\theta) + i\sin(2\pi\theta)$.
\end{proof}


\section{Proof of Effective Equidistribution for All Curves}\label{sec:equidallcurves}

We use Birch's \cite{Bi} results on the moments of the family of all elliptic curves modulo $p$ (there are some typos in his explicit formulas; we correct these in Appendix \ref{sec:momentsallcurves}); unfortunately, these are results for quantities such as $(2 \sqrt{p} \cos \theta_n)^{2R}$, and the quantity which naturally arises in our investigation is $e(mx_n)$ (with $x_n$ running over the normalized angles $\theta_{a,b}(p)/\pi$), specifically \be \left| \sum_{n = 1}^{\widetilde{V}_p} e(mx_n) - \widetilde{V}_p c_m \right|. \ee By applying some combinatorial identities we are able to rewrite our sum in terms of the moments, which allows us to use Birch's results. The point of this section is not to obtain the best possible error term (which following \cite{BS,Sh1,Sh2} could be obtained by replacing Birch's bounds with \eqref{eq:resultfromKatz}) but rather to highlight how one may generalize and apply the framework from \cite{MS}.

We first set some notation. Let $\sigma_k(T_p)$ denote the trace of the Hecke operator $T_p$ acting on the space of cusp forms of dimension $-2k$ on the full modular group. We have $\sigma_{k+1}(T_p) = O(p^{k + c + \gep})$, where from \cite{Sel} we see we may take $c = 3/4$ (there is no need to use the optimal $c$, as our final result, namely \eqref{eq:Mgivenc}, will yield the same order of magnitude result for $c=3/4$ or $c=0$). Let $\mathcal{M}_p(2R)$ denote the $2R$\textsuperscript{th} moment of $2\cos(\theta_n) = 2\cos(\pi x_n)$ (as we are concerned with the normalized values, we use slightly different notation than in \cite{Bi}): \be \mathcal{M}_p(2R) \ = \ \frac1{\widetilde{V}_p} \sum_{n=1}^{\widetilde{V}_p} \left(2 \cos(\pi  x_n)\right)^{2R}. \ee

\begin{lem}[Birch]\label{lem:birch} Notation as above, we have \be \mathcal{M}_p(2R) \ = \ \frac1{R+1}\ncr{2R}{R} + O\left(2^{2R} \widetilde{V}_p^{-\frac{1-c-\gep}2}\right);\ee we may take $c = 3/4$ and thus there is a power saving.\footnote{Note $\frac1{R+1}\ncr{2R}{R}$ is the $R$\textsuperscript{th} Catalan number. The Catalan numbers are the moments of the semi-circle distribution, which is related to the Sato-Tate distribution by a simple change of variables.}
\end{lem}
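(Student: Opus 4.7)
The plan is to translate $\mathcal{M}_p(2R)$ into a sum of unnormalized Hecke coefficients $a_{a,b}(p)^{2R}$, insert Birch's explicit evaluation of that sum (as corrected in Appendix \ref{sec:momentsallcurves}), recognize the Catalan number as the leading polynomial-in-$p$ contribution after our normalization, and bound the remaining Hecke trace terms via Selberg's bound.

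First I would use Hasse's parametrization $a_n(p) = 2\sqrt{p}\cos\theta_n$, so that $(2\cos(\pi x_n))^{2R} = (2\cos\theta_n)^{2R} = a_n(p)^{2R}/p^R$ and hence
\be
\mathcal{M}_p(2R) \ = \ \frac{1}{V_p\, p^R} \sum_{\substack{a,b \bmod p \\ 4a^3 + 27 b^2 \not\equiv 0}} a_{a,b}(p)^{2R}.
\ee
The appendix records Birch's formula for this inner sum: it is a polynomial in $p$ of degree $R+2$ plus an explicit linear combination of Hecke traces $\sigma_{k+1}(T_p)$ with $1 \le k \le R$, where the coefficient in front of $\sigma_{k+1}(T_p)$ is a combinatorial constant bounded by $\binom{2R}{R}\le 2^{2R}$ times a polynomial in $p$ of degree $\le R-k+1$. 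The leading coefficient of the polynomial main term, together with the factor $V_p\, p^R = (p^2-p)\, p^R$ in the denominator, produces a constant that after simplification equals $\frac{1}{R+1}\ncr{2R}{R}$.

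Next I would verify that the leading polynomial main term indeed gives the Catalan number. Concretely, one checks the $R=1,2,3$ cases directly from the corrected Birch formulas in Appendix \ref{sec:momentsallcurves} and then argues inductively (or by a generating-function identity) that the top-order coefficient in $p$ of Birch's moment is $(p^2-p)\, p^R \cdot C_R + O(p^{R+1})$, where $C_R = \tfrac{1}{R+1}\ncr{2R}{R}$. Dividing by $V_p\, p^R$ then yields $C_R$ plus an error coming (a) from the lower-order polynomial-in-$p$ terms in the main contribution, which are $O(p^{R+1})/p^{R+2}=O(p^{-1})$ and are absorbed into the claimed bound, and (b) from the Hecke-trace contributions.

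Finally I would bound the trace contributions. Each trace term contributes
\be
\frac{1}{V_p\, p^R}\, O\!\left(2^{2R}\, p^{R-k+1}\right) \sigma_{k+1}(T_p) \ = \ O\!\left(2^{2R}\, p^{-1}\, p^{k+c+\gep - k}\right) \ = \ O\!\left(2^{2R}\, p^{c+\gep-1}\right),
\ee
where we applied Selberg's bound $\sigma_{k+1}(T_p)=O(p^{k+c+\gep})$ with $c=3/4$. Since $V_p = p(p-1)$ gives $p \asymp \widetilde V_p^{1/2}$, this is precisely $O(2^{2R}\, \widetilde V_p^{-(1-c-\gep)/2})$, as claimed. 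Summing the $O(R)$ such trace terms only changes the constant, since $R$ can be absorbed into $2^{2R}$. The main obstacle I anticipate is the bookkeeping in step two: Birch's original formulas contain the typos flagged in the appendix, so one has to track carefully which combinatorial coefficient multiplies each $\sigma_{k+1}(T_p)$ and verify that the top-degree polynomial contribution really assembles into the Catalan number rather than into some other combinatorial identity — this is where the identity proved in Appendix \ref{sec:combidentities} becomes essential.
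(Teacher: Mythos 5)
Your high-level plan matches the paper's: unwind $\mathcal{M}_p(2R)$ to a normalized moment $\frac{1}{V_p p^R}\sum a_{a,b}(p)^{2R}$, plug in Birch's evaluation, and control the Hecke-trace terms via Selberg's bound $\sigma_{k+1}(T_p)=O(p^{k+c+\gep})$. Your final error estimate, $p^{c+\gep-1}=\widetilde V_p^{-(1-c-\gep)/2}$ after dividing by $V_p p^R\asymp p^{R+2}$, is correct and is exactly what the paper does.

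However, the middle step has a genuine gap, and also a misattribution. You propose to establish that the leading coefficient is the Catalan number $C_R=\frac{1}{R+1}\ncr{2R}{R}$ by ``checking $R=1,2,3$ and then arguing inductively (or by a generating-function identity).'' No such derivation is needed, and none is given: Birch's general formula for $S_R^\ast(p)$ (bottom of page 59 of \cite{Bi}) already presents $\frac{1}{R+1}\ncr{2R}{R}$ explicitly as the coefficient of the top-degree term, with the remaining terms being an explicit combination $\sum_{k=1}^R \frac{2k+1}{R+k+1}\ncr{2R}{R+k}\,p^{1-k}\,\sigma_{k+1}(T_p)$ plus a small tail. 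One simply divides by $p^R$, observes $\widetilde V_p=p(p-1)$, and applies Selberg's bound termwise; the Catalan number falls out for free. Note also that Appendix \ref{sec:momentsallcurves} only records the corrected polynomial values of $S_R(p)$ for $R=1,2,3$ (fixing the missing $(p-1)$ normalization in Birch's published numbers); it does not contain Birch's general formula, so ``inserting Birch's evaluation as corrected in the appendix'' only makes sense for those three cases and cannot deliver the general $R$ statement.

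Second, you write that ``this is where the identity proved in Appendix \ref{sec:combidentities} becomes essential'' for assembling the Catalan number. That is incorrect. Lemma \ref{keycombinatoriallemmaforsuns}, the vanishing of $\sum_{r=0}^m (-1)^r\ncr{m}{r}\ncr{m+r}{r}\frac{1}{(r+1)(m+r)}$ for $m\ge 2$, plays no role in the present lemma. It is used in the following lemma, where $\cos(2m\theta_n)$ is expanded as $\sum_r c_{2m,2r}(2\cos\theta_n)^{2r}$ and the main Catalan contributions must cancel to show that $\sum_n\cos(2m\theta_n)$ is small for $|m|\ge 2$. Here, in Lemma \ref{lem:birch}, the Catalan number is simply read off from Birch and passed through unchanged.
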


\begin{proof} The result follows from dividing the equation for $S_R^\ast(p)$ on the bottom of page 59 of \cite{Bi} by $p^R$, as we are looking at the moments of the normalized Fourier coefficients of the elliptic curves, and then using the bound $\sigma_{k+1}(T_p) = O(p^{k+c+\gep})$, with $c = 3/4$ admissible by \cite{Sel}. Recall $\widetilde{V}_p=p(p-1)$ is the cardinality of the family. We have \bea \mathcal{M}_p(2R) & \ = \ & \frac1{R+1}\ncr{2R}{R} \frac{p(p-1)}{\widetilde{V}_p} \nonumber\\ & & \ \ + \ O\left(\sum_{k=1}^R \frac{2k+1}{R+k+1} \ncr{2R}{R+k} \frac{p^{1+c+\gep}}{\widetilde{V}_p} + \frac{p}{p^R \widetilde{V}_p}\right) \nonumber\\ & = & \frac1{R+1}\ncr{2R}{R} + O\left(2^{2R} \widetilde{V}_p^{-\frac{1-c-\gep}2}\right)\eea since $\widetilde{V_p} = p(p-1)$.
\end{proof}

A simple argument (see Remark \ref{rek:matchinpairs}) shows that the normalized angles are symmetric about $1/2$. This implies \be \sum_{n=1}^{\widetilde{V}_p} e(mx_n) \ = \ \sum_{n=1}^{\widetilde{V}_p} \cos(2\pi mx_n) + i  \sum_{n=1}^{\widetilde{V}_p} \sin(2\pi mx_n) \ = \ \sum_{n=1}^{\widetilde{V}_p} \cos(2m\theta_n),\ee where the sine piece does not contribute as the angles are symmetric about $1/2$, and we are denoting the $\widetilde{V}_p$ non-normalized angles by $\theta_n$.

Thus it suffices to show we have a power saving in \be \left| \sum_{n=1}^{\widetilde{V}_p} \cos(2m\theta_n) - \widetilde{V}_p c_m\right|. \ee By symmetry, it suffices to consider $m \ge 0$.

\begin{lem}Let $c_0 = 1$, $c_{\pm 1} = -1/2$ and $c_m = 0$ otherwise. There is some $c < 1$ such that \be \left| \sum_{n=1}^{\widetilde{V}_p} \cos(2m\theta_n) - \widetilde{V}_p c_m\right| \ \ll \ \left(m^2 2^{3m} \widetilde{V}_p^{-\frac{1-c-\gep}2}\right); \ee by the work of Selberg \cite{Sel} we may take $c = 3/4$.
\end{lem}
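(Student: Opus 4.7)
The plan is to express $\cos(2m\theta_n)$ as a linear combination of the $2r$-th moments $\mathcal M_p(2r)$ controlled by Birch's Lemma \ref{lem:birch}, apply that lemma termwise, and observe that the main terms collapse to $\widetilde V_p c_m$ via a second combinatorial identity relating Chebyshev-expansion coefficients to Catalan numbers.

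First, since $T_{2m}(\cos\theta)=\cos(2m\theta)$ for the Chebyshev polynomial $T_{2m}$, there exist coefficients $\beta_{m,r}$ (for $0\le r\le m$) such that
\[
\cos(2m\theta)\ =\ \sum_{r=0}^{m}\beta_{m,r}(2\cos\theta)^{2r},
\]
with the explicit form $\beta_{m,r}=(-1)^{m-r}\tfrac{m}{m+r}\binom{m+r}{2r}$ when $m\ge 1$, and $\beta_{0,0}=1$. This is the first of the combinatorial identities to be recorded in Appendix \ref{sec:combidentities} (cf.\ \cite{Mil4}). Summing over $n$,
\[
\sum_{n=1}^{\widetilde V_p}\cos(2m\theta_n)\ =\ \widetilde V_p\sum_{r=0}^{m}\beta_{m,r}\,\mathcal M_p(2r).
\]

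Next, I substitute Birch's estimate $\mathcal M_p(2r)=\tfrac{1}{r+1}\binom{2r}{r}+O\bigl(2^{2r}\widetilde V_p^{-(1-c-\gep)/2}\bigr)$ from Lemma \ref{lem:birch} into each summand. The leading contribution is $\widetilde V_p\sum_{r=0}^{m}\beta_{m,r}C_r$, where $C_r=\tfrac{1}{r+1}\binom{2r}{r}$ is the $r$-th Catalan number. The content of the second combinatorial identity is that
\[
\sum_{r=0}^{m}\beta_{m,r}\,C_r\ =\ c_m,
\]
which equals $1$ for $m=0$, $-\tfrac12$ for $m=1$, and $0$ for $m\ge 2$. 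A clean verification uses the integral representation $C_r=\int_0^\pi (2\cos\theta)^{2r}\tfrac{2\sin^2\theta}{\pi}\,d\theta$: the left side then equals $\int_0^\pi\cos(2m\theta)\,\muST(d\theta)$, which is precisely the Fourier coefficient $c_m$ of the Sato-Tate measure computed in \S\ref{sec:preliminaries}.

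Finally, the total error is bounded by $\widetilde V_p\cdot \widetilde V_p^{-(1-c-\gep)/2}\sum_{r=0}^{m}|\beta_{m,r}|\,2^{2r}$. Since the sum of absolute values of the coefficients of $T_{2m}$ in the basis $\{(2x)^{2r}\}_{r\le m}$ grows like $(1+\sqrt{2})^{2m}<2^{3m}$, after absorbing a polynomial-in-$m$ factor we obtain the claimed $O(m^2\,2^{3m}\,\widetilde V_p^{-(1-c-\gep)/2})$. The main obstacle is exactly this last step: the $\beta_{m,r}$ alternate in sign, so a cancellation-preserving estimate would give a power saving in $m$, whereas a crude triangle inequality loses all of it. This unavoidable loss is precisely what prevents Theorem \ref{thm:allellipticcurves} from achieving a full power saving in $\widetilde V_p$ and limits it to $\widetilde V_p/\log\widetilde V_p$.
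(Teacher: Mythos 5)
Your proof follows the same broad architecture as the paper's: expand $\cos(2m\theta)$ in the basis $\{(2\cos\theta)^{2r}\}_{0\le r\le m}$ via Chebyshev coefficients, substitute Birch's moment estimate termwise, identify the main term as $c_m$, and bound the accumulated error. Where you genuinely diverge is in the verification of the main-term identity $\sum_{r=0}^m\beta_{m,r}C_r=c_m$. The paper isolates this as Lemma~\ref{keycombinatoriallemmaforsuns} and proves it (for $m\ge2$) by an explicit binomial manipulation culminating in an appeal to Vandermonde's convolution (Lemma~\ref{lem:neededlemcomb}), with $m=1$ handled separately by the double-angle formula and the exact second moment. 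Your route is slicker and more conceptual: recognize $C_r=\int_0^\pi(2\cos\theta)^{2r}\,d\muST(\theta)$, pull the finite sum inside the integral to get $\int_0^\pi\cos(2m\theta)\,d\muST(\theta)$, and read off the Fourier coefficient already computed in \S\ref{sec:preliminaries}. This handles all $m$ uniformly, makes it transparent \emph{why} the Catalan numbers appear, and dispenses with the appendix machinery (Vandermonde, the hypergeometric alternative, Zeilberger). For the error term your observation that $\sum_r|\beta_{m,r}|2^{2r}=(-1)^mT_{2m}(i)=\tfrac12\bigl[(1+\sqrt2)^{2m}+(1-\sqrt2)^{2m}\bigr]$ is actually a sharper and cleaner bound than the paper's, which inflates $2^{2r}$ to a Catalan number times $m^2$ and then sums $\binom{m}{r}\binom{m+r}{r}$ crudely; both end up at $\ll m^22^{3m}$, but yours pins the true growth at $(3+2\sqrt2)^m$. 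One small imprecision: what you want is $\sum_r|\beta_{m,r}|2^{2r}$ (the coefficients weighted by $2^{2r}$, i.e.\ $|T_{2m}(i)|$), not the bare sum of absolute coefficients as your phrasing suggests; the number $(1+\sqrt2)^{2m}$ you quote is the correct one for the weighted sum, so the estimate is right even if the description of it is slightly off. Your closing remark about the sign-alternation in $\beta_{m,r}$ being the unavoidable source of loss is exactly the point the paper makes in motivating the comparison with Theorem~\ref{thm:oneparamellipticcurves}.
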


\begin{proof}
The case $m=0$ is trivial. For $m=1$ we use the trigonometric identity $\cos(2\theta_n) = 2 \cos^2(\theta_n) - 1$. As $c_{\pm 1} = -1/2$ we have \bea \sum_{n=1}^{\widetilde{V}_p} \cos(2\theta_n) - \frac{\widetilde{V}_p}2 & \ = \ & \sum_{n=1}^{\widetilde{V}_p} \left[ \left(2\cos^2 \theta_n - 1\right) + \frac12\right] \nonumber\\ & = & \frac12 \sum_{n=1}^{\widetilde{V}_p} \left((2 \cos \theta_n)^2 - 1\right) \nonumber\\ & = & \frac12 \sum_{n=1}^{\widetilde{V}_p} \left(\frac{(2\sqrt{p}\cos\theta_n)^2}{p} - 1\right). \eea Note the sum of $(2\sqrt{p}\cos \theta_n)^2$ is the second moment of the number of solutions modulo $p$. From \cite{Bi} we have that this is $p + O(1)$; the explicit formula given in \cite{Bi} for the second moment is wrong; see Appendix \ref{sec:momentsallcurves} for the correct statement. Substituting yields \bea \left|\sum_{n=1}^{\widetilde{V}_p} \cos(2\theta_n) - \frac{\widetilde{V}_p}2\right| & \ \ll \ & O(1). \eea

The proof is completed by showing that $\sum_{n=1}^{\widetilde{V}_p} \cos(2m\theta_n) = O_m(\widetilde{V}_p^{1/2})$ provided $2 \le m \le M$. In order to obtain the best possible results, it is important to understand the implied constants, as $M$ will have to grow with $\widetilde{V}_p$ (which is of size $p^2$). While it is possible to analyze this sum for any $m$ by brute force, we must have $M$ growing with $p$, and thus we need an argument that works in general. As $c_{\pm 1} \neq 0$ but $c_m = 0$ for $|m| \ge 2$, we expect (and we will see) that the argument below does break down when $|m| = 1$.

There are many possible combinatorial identities we can use to express $\cos(2m\theta_n)$ in terms of powers of $\cos(\theta_n)$. We use the following (for a proof, see Definition 2 and equation (3.1) of \cite{Mil4}): \be\label{eq:sjmexpansiontwocos2mtheta} 2\cos(2m\theta_n) \ = \ \sum_{r=0}^{m} c_{2m,2r} (2 \cos \theta_n)^{2r}, \ee where $c_{2r} = (2r)!/2$, $c_{0,0} = 0$, $c_{2m,0} = (-1)^m 2$ for $m\ge 1$, and for $1 \le r \le m$ set \be c_{2m,2r} \ = \ \frac{(-1)^{r+m}}{c_{2r}}\prod_{j=0}^{r-1} (m^2 - j^2) \ = \  \frac{(-1)^{m+r}}{c_{2r}}\frac{m \cdot (m+r-1)!}{(m-r)!}. \ee We now sum \eqref{eq:sjmexpansiontwocos2mtheta} over $n$ and divide by $\widetilde{V}_p$, the cardinality of the family. In the argument below, at one point we replace $2^{2r}$ in an error term with $2012\frac1{r+1}\ncr{2r}{r} \cdot m^2$; this allows us to pull the $r$\textsuperscript{th} Catalan number, $\frac{1}{r+1}\ncr{2r}{r}$, out of the error term.\footnote{The reason this is valid is that the largest binomial coefficient is the middle (or the middle two when the upper argument is odd). Thus $2^{2r} = (1+1)^{2r} \le (2r+1) \ncr{2r}{r} \le 2(m+1)\ncr{2r}{r}$ (as $m \le r$), and the claim follows from $\frac{2012m^2}{r+1} \ge 2(m+1)$ for $m \ge 2$ and $0 \le r \le m$.} Using
Lemma \ref{lem:birch} we find \bea & & \frac1{\widetilde{V}_p} \sum_{n=1}^{\widetilde{V}_p} 2\cos(2m\theta_n)  \ = \  \sum_{r=0}^m c_{2m,2r} \frac1{\widetilde{V}_p} \sum_{n=1}^{\widetilde{V}_p} (2 \cos \theta_n)^{2r} \nonumber\\ & = & \sum_{r=0}^m \left( \frac1{r+1}\ncr{2r}{r} + O\left(2^{2r} \widetilde{V}_p^{-\frac{1-c-\gep}2}\right) \right) c_{2m,2r}\nonumber\\ &=& \sum_{r=0}^m \left( \frac1{r+1}\frac{(2r)!}{r!r!} \frac{(-1)^{m+r} 2}{(2r)!}  \frac{m \cdot (m+r)!}{(m-r)! \cdot (m+r)} \right)\nonumber\\ & & \ \ \ \cdot \ \left(1 + O\left(m^2 \widetilde{V}_p^{-\frac{1-c-\gep}2} \right)\right) \nonumber\\ & = & (-1)^m 2m \sum_{r=0}^m \left((-1)^r\frac{m!}{r!(m-r)!}   \frac{(m+r)!}{m!r!}\frac{1}{(r+1)(m+r)} \right)\nonumber\\ & & \ \ \ \cdot \ \left(1 + O\left(m^2 \widetilde{V}_p^{-\frac{1-c-\gep}2} \right)\right)  \nonumber\\ & = & (-1)^m 2m \sum_{r=0}^m \left((-1)^r\ncr{m}{r}  \ncr{m+r}{r}\frac{1}{(r+1)(m+r)} \right)\nonumber\\ & & \ \ \ \cdot \ \left(1 + O\left(m^2 \widetilde{V}_p^{-\frac{1-c-\gep}2} \right)\right). \eea We first bound the error term. For our range of $r$, $\ncr{m+r}{r} \le \ncr{2m}{m} \le 2^{2m}$. The sum of $\ncr{m}{r}$ over $r$ is $2^m$, and we get to divide by at least $m+r \ge m$. Thus the error term is bounded by \be O\left(m^2 2^{3m} \widetilde{V}_p^{-\frac{1-c-\gep}2} \right). \ee We now turn to the main term. It it just $(-1)^m 2m$ times the sum in Lemma \ref{keycombinatoriallemmaforsuns}, which is shown in that lemma to equal 0 for any $|m| \ge 2$.
\end{proof}

\begin{rek} Without Lemma \ref{keycombinatoriallemmaforsuns}, our combinatorial expansion would be useless. We thus give several proofs in the appendix (including a brute force, hypergeometric and an application of Zeilberger's Fast Algorithm). \end{rek}

\begin{rek} It is possible to get a better estimate for the error term by a more detailed analysis of $\sum_{r\le m} \ncr{m}{r} \ncr{m+r}{r}$; however, the improved estimates only change the constants in the discrepancy estimates, and not the savings. This is because this sum is at least as large as the term when $r \approx m/2$, and this term contributes something of the order $3^{3m/2} / m$ by Stirling's formula. We will see that any error term of size $3^{am}$ for a fixed $a$ gives roughly the same value for the best cutoff choice for $M$, differing only by constants. Thus we do not bother giving a more detailed analysis to optimize the error here. \end{rek}

We now prove the first of our two main theorems.

\begin{proof}[Proof of Theorem \ref{thm:allellipticcurves}] We must determine the optimal $M$ to use in \eqref{eq:definitiondiscdistanceUS}: \bea  D_{I,\widetilde{V}_p}(\must) & \ \ll \ & \frac{\widetilde{V}_p}{M+1}  + \sum_{1 \le m \le M} \left(\frac1{M+1} + \frac1{m} \right) \left(m^2 2^{3m} \widetilde{V}_p^{-\frac{1-c-\gep}2}\right) \nonumber\\ & \ll & \frac{\widetilde{V}_p}{M} +  M 2^{3M} \widetilde{V}_p^{-\frac{1-c-\gep}2} \nonumber\\ \eea as $\frac{1}{M+1} \ll \frac1{m}$ and $\sum_{m \le m} 2^{3m} \ll 2^{3M}$. For all $c > 0$ we find the minimum error by setting the two terms equal to each other, which yields \be \widetilde{V}_p^{\frac{3-c-\gep}{2}} \ = \ M^2 2^{3M} \ \ll \ e^{3M}, \ee which when equating yields\footnote{We could obtain a slightly better constant below with a little more work; however, as it will not affect the quality of our result we prefer to give the simpler argument with a slightly worse constant.} \be e^{3M} \ \approx \ e^{\frac{3-c-\gep}{2} \log \widetilde{V}_p}, \ee which implies \be\label{eq:Mgivenc} M \ \approx \ \frac{3-c-\gep}{6} \log \widetilde{V}_p. \ee We thus see that we may find a constant $C$ such that \be D_{I,\widetilde{V}_p}(\must) \ \le \ C \frac{\widetilde{V}_p}{\log \widetilde{V}_p}.\ee
\end{proof}



\section{Proof of Effective Equidistribution for One-parameter families}\label{sec:effequidoneparam}

Instead of studying the family of all elliptic curves, we can also investigate one-parameter families over $\Q(T)$. Thus, consider the family $\mathcal{E}: y^2 = x^3 + A(T) x + B(T)$, where $A(T)$ and $B(T)$ are in $\Z(T)$. We assume that $j(T)$ is not constant for the family. Michel \cite{Mic} proved a Sato-Tate law for such families. In particular, he proved

\begin{thm}[Michel \cite{Mic}]\label{thm:michel} Consider a one-parameter family of elliptic curves over $\Q(T)$ with non-constant $j$-invariant. Let $c_\Delta$ denote the number of complex zeros of $\Delta(z) = 0$ (where $\Delta$ is the discriminant), $\psi_p$ an additive character (and set $\delta_{\psi_p} = 0$ if this character is trivial and 1 otherwise), and write $a_{t,p}$ as $2\sqrt{p} \cos \theta_{t,p}$ with $\theta_{t,p} \in [0,\pi]$. Let \be {\rm sym}_k(\theta) \ = \ \frac{\sin((k+1)\theta)}{\sin \theta}. \ee
Then \be \left| \frac1{p} \sum_{t \bmod p \atop \Delta(t) \neq 0} {\rm sym}_k\theta_{t,p}\right| \ \le \ \frac{(k+1) (c_\Delta - \delta_{\psi_p} - 1)}{\sqrt{p}}. \ee Additionally, we have \be \left| \frac1{p} \sum_{t \bmod p \atop \Delta(t) \neq 0} \cos \theta_{t,p} \right| \ \le \ \frac{C}{\sqrt{p}} \ee for some $C$ depending on the family. Finally, we may drop the additive character and drop the restriction that $\Delta(t) \neq 0$ at the cost of a bounded number of summands, each of which is at most $(k+1)$,\footnote{This is readily seen by writing $\sin ((k+1)\theta) = \sin (\theta) \cos (k\theta) + \cos (\theta)\sin (k\theta)$ and proceeding by induction.} which implies these relations still hold provided we multiply the bounds on the right hand side by some constant $C'$.
\end{thm}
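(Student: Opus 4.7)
The plan is to realize the sum as the trace of Frobenius on $\ell$-adic cohomology and combine the Grothendieck--Lefschetz trace formula with Deligne's Weil~II bounds and an Euler--Poincar\'e count. Let $U\subset\mathbb{A}^1_{\F_p}$ be the open complement of $\{t:\Delta(t)=0\}$, and let $\pi\colon\mathcal{E}\to U$ be the resulting smooth elliptic fibration. The sheaf $\mathcal{F}:=R^1\pi_\ast\Q_\ell$ is lisse of rank~$2$ on $U$, pure of weight~$1$, and its Frobenius eigenvalues at $t\in U(\F_p)$ equal $\sqrt{p}\,e^{\pm i\theta_{t,p}}$, so $\operatorname{tr}(\mathrm{Frob}_t\mid \operatorname{Sym}^k\mathcal{F})=p^{k/2}\operatorname{sym}_k(\theta_{t,p})$. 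To carry the additive character~$\psi_p$ I would tensor with the rank-one Artin--Schreier sheaf $\mathcal{L}_{\psi_p}$, setting $\mathcal{G}_k:=\operatorname{Sym}^k\mathcal{F}\otimes\mathcal{L}_{\psi_p}$ (so $\mathcal{G}_k=\operatorname{Sym}^k\mathcal{F}$ when $\psi_p$ is trivial).

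The Grothendieck--Lefschetz formula then expresses the sum as
\[
\sum_{t\in U(\F_p)}\!\operatorname{tr}(\mathrm{Frob}_t\mid\mathcal{G}_k) \;=\; \sum_{i=0}^{2}(-1)^i\operatorname{tr}\!\bigl(\mathrm{Frob}_p\bigm| H^i_c(U_{\overline{\F_p}},\mathcal{G}_k)\bigr).
\]
The $H^0_c$ term vanishes because $U$ is affine and $\mathcal{G}_k$ is lisse. The crucial geometric step is to prove $H^2_c=0$: by Poincar\'e duality this amounts to showing $\mathcal{G}_k$ has no geometric monodromy coinvariants. Since $j(T)$ is non-constant, the image of the geometric fundamental group of $U$ acting on $\mathcal{F}$ has Zariski closure containing $\operatorname{SL}_2$ (invoke Igusa's theorem at a place of multiplicative reduction to produce a unipotent pseudoreflection, then use the classification of irreducible subgroups of $\operatorname{GL}_2$). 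Hence $\operatorname{Sym}^k\mathcal{F}$ is a nontrivial irreducible representation of $\operatorname{SL}_2$ for $k\geq 1$, and the Artin--Schreier twist kills any remaining invariants when $\psi_p$ is nontrivial.

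With both extremal cohomology groups killed, the entire sum is governed by $H^1_c$. Deligne's Weil~II bounds each Frobenius eigenvalue on $H^1_c(\mathcal{G}_k)$ by $p^{(k+1)/2}$, since $\mathcal{G}_k$ is mixed of weights $\leq k$, whence
\[
\bigl|\operatorname{tr}(\mathrm{Frob}_p\mid H^1_c)\bigr|\;\leq\;\dim H^1_c\cdot p^{(k+1)/2}.
\]
To finish I bound $\dim H^1_c = -\chi_c(U,\mathcal{G}_k)$ via Grothendieck--Ogg--Shafarevich: tracking the rank-$(k+1)$ contribution, the factor $\chi_c(U)=1-c_\Delta$, and the tame and wild contributions at the punctures (the Artin--Schreier twist at $\infty$ supplying the $\delta_{\psi_p}$ correction through its Swan conductor) yields the precise bound $\dim H^1_c\leq (k+1)(c_\Delta-\delta_{\psi_p}-1)$. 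Dividing through by $p^{k/2+1}$ produces the stated inequality. The $\cos\theta_{t,p}$ bound is the $k=1$ case via $\operatorname{sym}_1(\theta)=2\cos\theta$, and removing the restriction $\Delta(t)\neq 0$ adds at most $c_\Delta$ extra summands of absolute value at most $k+1$.

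The principal obstacle will be the big-monodromy step underlying $H^2_c=0$: one must verify rigorously that a one-parameter family with non-constant $j$-invariant has geometric monodromy whose Zariski closure equals $\operatorname{SL}_2$. This combines the non-constancy of $j$ (which forces a multiplicative singular fiber, providing a unipotent element in the image of inertia) with the irreducibility of $\mathcal{F}$ and Katz-style classification arguments. Matching the Euler--Poincar\'e count with Michel's precise constant $(k+1)(c_\Delta-\delta_{\psi_p}-1)$ is a secondary bookkeeping task requiring careful analysis of the local monodromy structure (tame vs.\ wild, drop in invariants) at each singular fiber of the Kodaira--N\'eron model, but it hinges on first having pinned down the global monodromy.
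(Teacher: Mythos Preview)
The paper does not prove this theorem at all: it is quoted as a black box from Michel \cite{Mic} and then applied to derive Theorem~\ref{thm:oneparamellipticcurves}. So there is no ``paper's own proof'' to compare against; what you have written is a reconstruction of Michel's original argument.

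Your sketch is faithful to Michel's method: express the sum as a trace of Frobenius on $\operatorname{Sym}^k\mathcal{F}\otimes\mathcal{L}_{\psi_p}$ over $U$, apply Grothendieck--Lefschetz, kill $H^0_c$ and $H^2_c$ (the latter via big monodromy), bound the surviving $H^1_c$ by Deligne's Weil~II, and control $\dim H^1_c$ by the Euler--Poincar\'e formula. You correctly flag the big-monodromy step as the crux and the conductor bookkeeping as the place where the exact constant $(k+1)(c_\Delta-\delta_{\psi_p}-1)$ must emerge. One caution on that bookkeeping: the Grothendieck--Ogg--Shafarevich computation requires knowing the local drop of $\operatorname{Sym}^k\mathcal{F}$ at each singular fiber (not just of $\mathcal{F}$), and the passage from ``$\chi_c(U)=1-c_\Delta$ times rank $k+1$'' to Michel's precise constant involves showing that the tame local terms at the finite punctures cancel against part of the rank contribution; your sketch gestures at this but does not carry it out. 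Still, as an outline of how the cited result is actually proved, this is accurate.
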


\begin{rek}\label{rek:michelsharp} Miller \cite{Mil2} showed that the error term in Theorem \ref{thm:michel} is sharp. Specifically, the second moment of the family $y^2 = x^3+Tx^2+1$ of elliptic curves over $\Q(T)$ for $p>2$ is \be
A_{2}(p) \ := \ \sum_{t\bmod p} a_t(p)^2 \ = \  p^2 -
n_{3,2,p}p - 1 + p\sum_{x \bmod p} \js{4x^3+1}, \ee where
$n_{3,2,p}$ denotes the number of cube roots of $2$ modulo $p$. For
any $[a,b] \subset [-2,2]$ there are infinitely many primes $p
\equiv 1 \bmod 3$ such that \be A_{2}(p) - \left(p^2-
n_{3,2,p}p - 1\right)\ \in\ [a\cdot p^{3/2},b\cdot p^{3/2}].\ee \end{rek}

Theorem \ref{thm:michel} is used by Michel to obtain good estimates for the average rank in these families, as well as (of course) proving Sato-Tate laws. Using our techniques above, we can convert Michel's bounds to a quantified equidistribution law.

We recall the notation for Theorem \ref{thm:oneparamellipticcurves}. Consider a one-parameter family of elliptic curves over $\Q(T)$ with non-constant $j(T)$. Let there be $V_p = p + O(1)$ reduced curves modulo $p$, and set $\widetilde{V}_p = 2V_p$. For each curve $E_t$ consider the angles $\theta_{t,p}$ and $\pi-\theta_{t,p}$, with $\theta_{t,p} \in [0,1]$, and the normalized angles $x_n = \theta_{t,p}/\pi$ and $x_{n+V_p}=1-\theta_{t,p}/\pi$ (for $1\le n \le V_p$).

\begin{proof}[Proof of Theorem \ref{thm:oneparamellipticcurves}] We must show $D_{I,\widetilde{V}_p}(\must) \ \ll \ \widetilde{V}_p^{3/4}$ (where $\widetilde{V}_p \approx 2p$). As in the proof of Theorem \ref{thm:allellipticcurves}, it suffices to show \be \left| \sum_{t \bmod p} \cos(2m\theta_{t,p}) - c_{m} p\right|, \ee with $c_0 = 1$, $c_1 = -1/2$ and all other $c_{m} = 0$. This is because we have enlarged our set of normalized angles to be symmetric about 1/2. Thus when we study $e(mx_n) = \cos(2\pi m x_n) + i\sin(2\pi m x_n)$, the sine sum vanishes. We are therefore left with the cosine sum, with the normalized angles $x_n$ and $x_{n+V_p}$ contributing equally. Thus we may replace the sum of the cosine piece over $n$ with a sum over the angles $\theta_{t,p}$, so long as we remember to multiply by 2 when computing the discrepancy later. While we should subtract $c_m V_p $ and not $c_m p$, as $V_p = p+O(1)$ the error in doing this is dwarfed by the error of the piece we are studying.

The case of $2m=0$ is trivial. If $2m=2$, then we are studying $\cos 2 \theta_{t,p} = -\foh + \foh {\rm sym}_2(\theta)$. By Theorem \ref{thm:michel}, we thus find that \bea \left| \sum_{t \bmod p} \cos(2\theta_{t,p}) + \frac{p}{2}\right| & \ = \  & \left| \sum_{t \bmod p}  \foh {\rm sym}_2(\theta) \right| \ \le \ \frac{C}{\sqrt{p}}. \eea For higher $m$, we use Chebyshev polynomials (see \cite{Wi}). The Chebyshev polynomials of the first kind are given by $T_\ell(\cos \theta) = \cos(\ell\theta)$; the Chebyshev polynomials of the second kind are $U_\ell(\cos \theta) = {\rm sym}_{\ell+1}(\theta)$. These polynomials are related by \be T_\ell(\cos \theta) \ = \ \frac{U_\ell(\cos \theta) - U_{\ell-2}(\cos \theta)}{2} \ = \ \frac{{\rm sym}_\ell(\theta) - {\rm sym}_{\ell-2}(\theta)}{2}; \ee we use this with $\ell = 2m \ge 4$. Using Theorem \ref{thm:michel} we see that for $m \ge 2$, \be \left| \sum_{t \bmod p} \cos(2m\theta_{t,p}) \right| \ \le \ Cm\sqrt{p}. \ee

From \eqref{eq:definitiondiscdistanceUS}, the discrepancy satisfies \bea & &\foh D_{I,\widetilde{V}_p}(\must)  \ \le \  \frac{p||\mu||}{M+1} \nonumber\\ & & + \sum_{1 \le m \le M} \left(\frac1{M+1} + \min\left(b-a,\frac{1}{\pi|m|}\right)\right) \left|\sum_{t=1}^p e(mx_n) - c_mp\right|.\nonumber\\ \eea
Using our bounds, we have \bea D_{I,p}(\mu) & \ \ll \ & \frac{p||\mu||}{M+1} + \sum_{m=1}^M \frac{Cm\sqrt{p}}{m} \ \ll \ \frac{p}{M} + M\sqrt{p}. \eea The two error terms are of the same order of magnitude when $M^2 = \sqrt{p}$, or $M = p^{1/4}$. This leads to \be D_{I,p}(\mu) \ \ll \ p^{3/4}, \ee which should be compared to a discrepancy of order $p$; in other words, we have a power savings (much better than the logarithmic savings in the family of all elliptic curves).
\end{proof}

\begin{rek} Note we could have used the Chebyshev identities to handle the $m=1$ case as well, as in fact we implicitly did when we rewrote $\cos 2\theta$; we prefer to break the analysis into two cases as the $m=1$ case has $c_m \neq 0$. \end{rek}

\begin{rek} Rosen and Silverman \cite{RS} proved a conjecture of Nagao \cite{Na} relating the distribution of the $a_E(p)$'s and the rank. Unfortunately the known lower order term due to the rank of the family is of size $p^{1/2}$, which is significantly smaller than the error terms of size $p^{3/4}$ analyzed above. As noted in Remark \ref{rek:michelsharp}, the error term is sharp and cannot be improved for all families.  \end{rek}


\appendix


\section{Combinatorial Identities}\label{sec:combidentities}

We first state some needed properties of the binomial coefficients. For $n, r$ non-negative integers we set $\ncr{n}{k} = \frac{n!}{k!(n-k)!}$. We generalize to real $n$ and $k$ a positive integer by setting \be \ncr{n}{k} \ = \ \frac{n(n-1)\cdots (n-(k-1))}{k!}, \ee which clearly agrees with our original definition for $n$ a positive integer. Finally, we set $\ncr{n}{0} = 1$ and $\ncr{n}{k} = 0$ if $k$ is a negative integer.

To prove our main result we need the following two lemmas; we follow the proofs in \cite{Ward}.

\begin{lem}[Vandermonde's Convolution Lemma] Let $r,s$ be any two real numbers and $k, m, n$ integers. Then \be \sum_k \ncr{r}{m+k} \ncr{s}{n-k} \ = \ \ncr{r+s}{m+n}. \ee
\end{lem}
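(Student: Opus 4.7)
The plan is to prove Vandermonde's Convolution by comparing coefficients of $x^{m+n}$ on the two sides of the generating function identity
\[
(1+x)^r\,(1+x)^s \ =\ (1+x)^{r+s}.
\]
For any real $r$, the (generalized) binomial series gives $(1+x)^r = \sum_{j\ge 0}\ncr{r}{j}x^j$, either as a Taylor expansion of the analytic function for $|x|<1$ or as a formal power series using the extended definition of $\ncr{r}{j}$ from the preceding paragraph. The same holds for $s$ and for $r+s$.

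First I would write out both sides as power series and extract the coefficient of $x^{m+n}$. On the right, this is immediately $\ncr{r+s}{m+n}$. On the left, the Cauchy product yields
\[
\sum_{\substack{j,\ell\ge 0\\ j+\ell=m+n}}\ncr{r}{j}\ncr{s}{\ell}.
\]
Reparametrizing by $j = m+k$ and $\ell = n-k$ (so the constraint $j+\ell=m+n$ is automatically satisfied), this becomes $\sum_k \ncr{r}{m+k}\ncr{s}{n-k}$; the convention $\ncr{r}{j}=0$ for negative integer $j$ automatically restricts the effective sum to $-m\le k\le n$, making it finite. Equating the two expressions for the coefficient of $x^{m+n}$ yields the claimed identity.

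The one subtle point, which is the main (though minor) obstacle, is justifying the coefficient comparison when $r$ and $s$ are arbitrary real numbers rather than non-negative integers. Two clean ways to handle this: either appeal directly to uniqueness of coefficients for convergent power series, using the analytic identity $(1+x)^r(1+x)^s=(1+x)^{r+s}$ valid on $|x|<1$; or argue by polynomial extension. In the latter approach, for fixed integers $m,n$ each side of the identity is, after multiplying through by $(m+k)!(n-k)!$ and rearranging, a polynomial in $(r,s)$ of bounded total degree. A direct combinatorial proof for non-negative integer $r,s$ — split a set of $r+s$ objects into blocks of sizes $r$ and $s$, then count subsets of size $m+n$ according to how many elements come from the first block — establishes the identity on a Zariski-dense subset of $\R^2$, hence on all of $\R^2$. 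Either route completes the proof.
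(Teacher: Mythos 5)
Your proof is correct and follows essentially the same strategy as the paper's: expand a generating-function identity via the binomial theorem, compare coefficients, and extend from (non-negative) integer $r,s$ to all reals by a polynomial/Zariski-density argument. The only cosmetic differences are that the paper works with the two-variable identity $(x+y)^r(x+y)^s=(x+y)^{r+s}$ and first reduces to $m=0$, while you use $(1+x)^r(1+x)^s=(1+x)^{r+s}$ directly and also note the alternative analytic justification via convergent power series.
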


\begin{proof} It suffices to prove the claim when $r, s$ are integers. The reason is that both sides are polynomials, and if the polynomials agree for an infinitude of integers then they must be identical. It suffices to consider the special case $m=0$, in which case we are reduced to showing \be\label{eq:ncrcombidentityvandermonde} \ncr{r}{k} \ncr{s}{n-k} \ = \ \ncr{r+s}{n}. \ee Consider the polynomial \be\label{eq:usefulpolyvandermondeconv} (x+y)^r (x+y)^s\ =\ (x+y)^{r+s}.\ee If we use the binomial theorem to expand the left hand side of \eqref{eq:usefulpolyvandermondeconv}, we get the coefficient of the $x^n y^{r+s-n}$ is the left hand side of \eqref{eq:ncrcombidentityvandermonde}, while if we use the binomial theorem to find the coefficient of $x^n y^{r+s-n}$ on the right hand side of \eqref{eq:usefulpolyvandermondeconv} we get \eqref{eq:ncrcombidentityvandermonde}, which completes the proof.
\end{proof}

\begin{lem}\label{lem:neededlemcomb} Let $\ell, m, s$ be non-negative integers. Then \be \sum_k (-1)^k \ncr{\ell}{m+k} \ncr{s+k}{n} \ = \ (-1)^{\ell+m} \ncr{s-m}{n-\ell}. \ee \end{lem}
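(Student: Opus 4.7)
The plan is to reduce this to a cleaner shape, then invoke Vandermonde's convolution together with the basic ``trinomial'' rearrangement $\binom{\ell}{j}\binom{j}{i}=\binom{\ell}{i}\binom{\ell-i}{j-i}$, and collapse the resulting inner sum via the binomial theorem.

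First I would reindex the summation variable by setting $j=m+k$, which turns the claim into the equivalent statement
\[
\sum_{j}(-1)^{j}\binom{\ell}{j}\binom{t+j}{n} \ = \ (-1)^{\ell}\binom{t}{n-\ell},
\]
where $t=s-m$; the sign $(-1)^{m}$ factors out on both sides. This normalization is worth doing because it isolates the combinatorial essence of the identity and removes the bookkeeping parameter $m$.

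Next I would apply Vandermonde's convolution (the lemma just proved) to the binomial $\binom{t+j}{n}$, writing
\[
\binom{t+j}{n} \ = \ \sum_{i}\binom{t}{n-i}\binom{j}{i}.
\]
Substituting this and interchanging the order of summation gives
\[
\sum_{j}(-1)^{j}\binom{\ell}{j}\binom{t+j}{n} \ = \ \sum_{i}\binom{t}{n-i}\sum_{j}(-1)^{j}\binom{\ell}{j}\binom{j}{i}.
\]
Now I would use the standard rearrangement $\binom{\ell}{j}\binom{j}{i}=\binom{\ell}{i}\binom{\ell-i}{j-i}$ (an immediate verification via factorials), so that after setting $k=j-i$ the inner sum becomes
\[
\binom{\ell}{i}(-1)^{i}\sum_{k}(-1)^{k}\binom{\ell-i}{k} \ = \ \binom{\ell}{i}(-1)^{i}(1-1)^{\ell-i},
\]
by the binomial theorem. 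This equals $0$ unless $i=\ell$, in which case it equals $(-1)^{\ell}$. Only the term $i=\ell$ survives the outer sum, leaving $(-1)^{\ell}\binom{t}{n-\ell}$, which is exactly what we want.

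The potentially delicate step is the second one: interchanging the finite sums and justifying that the Vandermonde expansion is valid when $t=s-m$ may be negative. Since the paper has already extended $\binom{r}{k}$ to real upper index via the falling-factorial formula, and both sides of the target identity are polynomial in $s$ for fixed integers $\ell,m,n$, the usual ``polynomial identity'' principle (agreement at infinitely many integer $s$ forces the identity) covers this. Beyond this, the argument is routine and the only real obstacle is keeping the index shifts honest; no deeper tool (generating functions, hypergeometric machinery, or Zeilberger's algorithm) is needed here, although induction on $\ell$ with Pascal's rule is a viable alternative should the Vandermonde route look awkward.
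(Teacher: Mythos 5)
Your proof is correct and takes a genuinely different route from the paper's. The paper's argument first transposes $\binom{s+k}{n}$ to put the summation index in the lower slot, then applies upper negation $\binom{s+k}{s+k-n}=(-1)^{s+k-n}\binom{-n-1}{s+k-n}$ to pull the alternating sign $(-1)^k$ out of the binomial coefficients entirely, after which a single application of Vandermonde in the ``combining'' direction (two binomials collapsing into one) finishes the job. You instead apply Vandermonde in the ``splitting'' direction, expanding $\binom{t+j}{n}$ as a convolution, then interchange the two finite sums, use the trinomial revision $\binom{\ell}{j}\binom{j}{i}=\binom{\ell}{i}\binom{\ell-i}{j-i}$, and collapse the inner alternating sum $\sum_k(-1)^k\binom{\ell-i}{k}=(1-1)^{\ell-i}$ via the binomial theorem, leaving only the $i=\ell$ term. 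Both are sound. Your version has the advantage of never invoking upper negation (the only place a possibly negative upper index appears is $t=s-m$ in $\binom{t}{n-i}$, and your appeal to the polynomial identity principle there is exactly right, since the sum is finite because $j$ is confined to $0\le j\le\ell$); the paper's version avoids the double-sum interchange and trinomial step and is therefore a line or two shorter, at the cost of the negation trick, which is less transparent. Either would serve in the paper.
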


\begin{proof} Using $\ncr{a}{b} = \ncr{a}{a-b}$, we rewrite $\ncr{s+k}{n}$ as $\ncr{s+k}{s+k-n}$, and we then rewrite $\ncr{s+k}{s+k-n}$ as $(-1)^{s+k-n} \ncr{-n-1}{s+k-n}$ by using the extension of the binomial coefficient, where we have pulled out all the negative signs in the numerators. The advantage of this simplification is that the summation index is now only in the denominator; further, the power of $-1$ is now independent of $k$. Factoring out the sign, our quantity is equivalent to \bea & & (-1)^{s-n} \sum_k \ncr{\ell}{m+k} \ncr{-n-1}{s+k-n} \nonumber\\ & & \ \ \ = \ (-1)^{s-n} \sum_k \ncr{\ell}{\ell-m-k} \ncr{-n-1}{s+k-n}, \eea where we again use $\ncr{a}{b} = \ncr{a}{a-b}$. By Vandermonde's Convolution, this equals $(-1)^{s-n}$ $\ncr{\ell-n-1}{\ell-m-n+s}$. Using $\ncr{s-m}{\ell-m-n+s}  = \ncr{s-m}{n-\ell}$ and collecting powers of $-1$ completes the proof (note $(-1)^{\ell-m} = (-1)^{\ell+m}$).
\end{proof}

\begin{lem}\label{keycombinatoriallemmaforsuns} Let $m$ be an integer greater than or equal to 1. Then \be \twocase{\sum_{r=0}^m (-1)^r \ncr{m}{r} \ncr{m+r}{r} \frac1{(r+1)(m+r)} \ = \ }{1/2}{if $m=1$}{0}{if $m \ge 2$.} \ee \end{lem}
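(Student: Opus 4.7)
My plan is to recognize the sum as a Legendre-polynomial integral and then kill it by orthogonality. I would first record the generating function identity
\[
\sum_{r=0}^{m} (-1)^r \binom{m}{r}\binom{m+r}{r} z^r \;=\; P_m(1-2z),
\]
which is just the standard hypergeometric representation $P_m(1-2z) = {}_2F_1(-m,m+1;1;z)$ of the Legendre polynomial. In parallel I would introduce the elementary Beta-type identity
\[
\frac{1}{(r+1)(m+r)} \;=\; \int_0^1\!\!\int_0^1 x^r y^{m+r-1}\, dx\, dy,
\]
valid for all $r\ge 0$ and $m\ge 1$. Plugging this into the sum and swapping the finite sum with the integral turns the statement into
\[
S_m \;=\; \int_0^1\!\!\int_0^1 y^{m-1}\, P_m(1-2xy)\, dx\, dy.
\]

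The next step is to do the $x$-integral with $u=xy$, which yields
\[
S_m \;=\; \int_0^1 y^{m-2} \Bigl(\int_0^y P_m(1-2u)\,du\Bigr)\, dy.
\]
For $m=1$ this is a direct one-line computation using $P_1(t)=t$; it evaluates to $\int_0^1 (1-y)\,dy = 1/2$, handling the exceptional case. For $m\ge 2$, I would apply Bonnet's antiderivative formula
\[
\int P_m(v)\, dv \;=\; \frac{P_{m+1}(v)-P_{m-1}(v)}{2m+1},
\]
together with $P_k(1)=1$, to get
\[
S_m \;=\; \frac{1}{2(2m+1)} \int_{-1}^{1} \Bigl(\tfrac{1-w}{2}\Bigr)^{m-2}\bigl[P_{m-1}(w)-P_{m+1}(w)\bigr]\, dw
\]
after the substitution $w=1-2y$ (up to a harmless constant I will sort out in the write-up).

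The main payoff is then immediate from orthogonality of Legendre polynomials on $[-1,1]$: the weight $(1-w)^{m-2}$ is a polynomial of degree $m-2$, hence a linear combination of $P_0,\dots,P_{m-2}$, while both $P_{m-1}$ and $P_{m+1}$ are orthogonal to every $P_j$ with $j\le m-2$. So each inner integral vanishes, giving $S_m=0$ for $m\ge 2$. I expect the only mild obstacle to be bookkeeping of the constants in Bonnet's formula and checking that the integration-order swap and the change of variables are valid (both of which are routine, since all sums are finite and all integrands are polynomials on a compact set). The generating-function identity and the orthogonality step are each standard, so the argument should be short and self-contained.
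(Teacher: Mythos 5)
Your argument is correct, and it follows a genuinely different route from the paper. The paper's main proof is a pure binomial manipulation: rewrite the summand, shift the index to complete the sum, and apply Vandermonde's convolution to collapse everything to the vanishing binomial coefficient $\ncr{m-2}{-2}$. The paper also sketches a hypergeometric alternative in which the entire summand, including the rational factor $1/((r+1)(m+r))$, is absorbed into ${}_2F_1(-m,m,2;1)$ and evaluated by Gauss's theorem, the zero coming from the pole of $\Gamma(2-m)$. You instead split the summand: the binomial part $(-1)^r\ncr{m}{r}\ncr{m+r}{r}z^r$ is recognized as $P_m(1-2z)$ via the standard hypergeometric form of the Legendre polynomial, while the rational factor is represented as a Beta-type double integral. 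After one integration using Bonnet's antiderivative formula and the change of variable $w=1-2y$, the vanishing for $m\ge 2$ follows from Legendre orthogonality against the polynomial weight $(1-w)^{m-2}$, which has degree $m-2 < m-1$. This is the most conceptual of the three arguments, since it exposes the identity as an orthogonality relation and isolates exactly why $m=1$ is exceptional (the weight's degree becomes negative and the orthogonality step is unavailable, so one computes directly). The price is importing the Legendre machinery (Bonnet's formula and orthogonality), whereas the paper's main proof stays within elementary binomial identities. The loose ends you flagged are genuinely harmless: the sum is finite and the integrands are polynomials on a compact set, so the interchange and change of variables are trivially valid, and the precise prefactor in front of the final integral is immaterial since that integral is zero.
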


\begin{proof} The case $m=1$ follows by direct evaluation. Consider now $m \ge 2$.
We have \bea S_m & \ = \ & \sum_{r=0}^m (-1)^r \ncr{m}{r} \ncr{m+r}{r} \frac1{(r+1)(m+r)} \nonumber\\ &= \ & \sum_{r=0}^m (-1)^r \ncr{m}{r} \frac{m+1}{m+1} \ncr{m+r}{r} \frac1{(r+1)(m+r)} \nonumber\\ &\ = \ &
\sum_{r=0}^m (-1)^r \frac{m! (m+1)}{(r+1)\cdot r!m!} \frac1{m+1} \ \frac{(m+r)(m+r-1)!}{r! m \cdot (m-1+r)!}  \frac1{m+r} \nonumber\\ &\ = \ & \sum_{r=0}^m (-1)^r \ncr{m+1}{r+1} \ncr{m-1+r}{r} \frac{1}{m(m+1)}  \nonumber\\ &\ = \ & \frac1{m(m+1)} \sum_{r=0}^m (-1)^r \ncr{m+1}{r+1} \ncr{m-1+r}{m-1}. \eea We change variables and set $u=r+1$; as $r$ runs from $0$ to $m$, $u$ runs from $1$ to $m+1$. To have a complete sum, we want $u$ to start at $0$; thus we add in the $u=0$ term, which is $\ncr{m-2}{m-1}$. As $m \ge 2$, this is 0 from the extension of the binomial coefficient (this is the first of two places where we use $m \ge 2$). Our sum $S_m$ thus equals \bea S_m & \ = \ & -\frac1{m(m+1)} \sum_{u=0}^{m+1} (-1)^u \ncr{m+1}{u} \ncr{m-2+u}{m-1}. \eea We now use Lemma \ref{lem:neededlemcomb} with $k=u$, $m=0$, $\ell = m+1$, $s=m-2$ and $n=m-1$; note the conditions of that lemma require $s$ to be a non-negative integer, which translates to our $m \ge 2$. We thus find \be S_m \ = \ -\frac1{m(m+1)} (-1)^{m+1} \ncr{m-2}{-2} \ = \ 0, \ee which completes the proof.
\end{proof}

We give another proof of Lemma \ref{keycombinatoriallemmaforsuns} below using hypergeometric functions; we thank Frederick Strauch for showing us this approach.

\begin{rek} We present an alternative proof of Lemma \ref{keycombinatoriallemmaforsuns} using the hypergeometric function \be\label{eq:hypergeodef} {}_2F_1(a,b,c;z) \ = \ \frac{\Gamma(c)}{\Gamma(b)\Gamma(c-b)} \int_0^1 \frac{t^{b-1} (1-t)^{c-b-1}dt}{(1-tz)^a}. \ee The following identity for the normalization constant of the Beta function is crucial in the expansions: \be B(x,y) \ = \ \int_0^1 t^{x-1} (1-t)^{y-1}dt \ = \ \frac{\Gamma(x)\Gamma(y)}{\Gamma(x+y)}. \ee We can use the geometric series formula to expand \eqref{eq:hypergeodef} as a power series in $z$ involving Gamma factors. Rewriting $\ncr{m}{r}$ as $(-1)^r \ncr{r-m-1}{r}$, after some algebra we find \be\label{eq:hypergeomprooflemmaa3} S_m\ =\ \frac{\Gamma(m) {}_2F_1(-m,m,2;1)}{\Gamma(2) \Gamma(1+m)} \ = \ \frac{\Gamma(m)}{\Gamma(1+m) \Gamma(2+m) \Gamma(2-m)} \ee (our summation over $r$ in the definition of $S_m$ has become the series expansion of ${}_2F_1(-m,m,2;1)$), where the last step uses \be {}_2F_1(a,b,c;1)\ =\ \frac{\Gamma(c) \Gamma(c-a-b)}{\Gamma(c-a) \Gamma(c-b)} \ee which follows from the normalization constant of the Beta function. Note that the right hand side of \eqref{eq:hypergeomprooflemmaa3} is $1/2$ when $m=1$ and $0$ for $m \ge 2$ because for such $m$, $1/\Gamma(2-m) = 0$ due to the pole of $\Gamma(2-m)$.
\end{rek}

\begin{rek} It is also possible to prove this lemma through symbolic manipulations. Using the results from \cite{PS,PSR}, one may input this into a Mathematica package, which outputs a proof.
\end{rek}


\section{Moments for the family of all curves}\label{sec:momentsallcurves}

Birch \cite{Bi} claims the following: Let \be S_R(p) \ = \ \sum_{a \bmod p}\ \sum_{b \bmod p}\left[ \sum_{x\bmod p} \js{x^3-ax-b}\right]^{2R}. \ee Then for $p \ge 5$,\bea S_1(p) & \ = \ & p^2 \nonumber\\ S_2(p) & = & 2p^3-3p \nonumber\\ S_3(p) & = & 5p^4-9p^2-5p. \eea There are obviously typos here. We know the Legendre sum is at most $2\sqrt{p}$ in absolute value, thus we expect $S_R(p)$ to be on the order of $p^2 \cdot (\sqrt{p})^{2R} = p^{R+2}$; note the powers of $p$ are too low (and they are too high for dividing $S_R(p)$ by the cardinality of the family).

Assuming $S_R(p)$ is a polynomial in $p$, from exploring the results for small $p$ we are led to \bea S_1(p) & \ = \ & p^3 - p^2 \nonumber\\ S_2(p) & = & 2 p^4 - 2 p^3 - 3 p^2 + 3 p \nonumber\\ S_3(p) & = & 5 p^5 - 5 p^4 - 9 p^3 + 4 p^2 + 5 p. \eea Note these are exactly the results from Birch multiplied by $p-1$; we thank Andrew Granville for pointing this out to us. In other words, the formulas in Birch are what remains after dividing by the trivial multiplicative factor $p-1$.

Let $S_R'(p)$ denote the same sum as $S_R(p)$, but with the additional restriction that $4a^3\neq 27b^2$. It is readily seen that $S_R'(p) = S_R(p)+(p-1)$; the reason is that if the discriminant equals zero, then $x^3-ax-b=(x-c)^2(x-d)$ for some $c,d$, and the sum of these Legendre symbols over all $x$ modulo $p$ is $\pm 1$ (the sum is the same as $\sum_{x \not\equiv c \bmod p} \js{x-d} = -\js{c-d} = \pm 1$). Explicitly, we  find \bea S_1(p) & \ = \ & p^3 - p^2 - p + 1 \nonumber\\ S_2(p) & = & 2 p^4 - 2 p^3 - 3 p^2 + 2 p + 1 \nonumber\\ S_3(p) & = & 5 p^5 - 5 p^4 - 9 p^3 + 4 p^2 + 4 p + 1. \eea

As the evaluation of these sums is central to this and other investigations, we provide two proofs of the formula for $S_1(p)$ in the hopes that these arguments will be of use to other researchers studying similar questions.

We first give the proof in \cite{Mil1}. We have the following expansion of $\js{x}$:
\begin{eqnarray}\label{eq:formulafromBEW}
\js{x} \ = \ G_p^{-1} \sum_{c=1}^p \js{c} \ep{cx},
\end{eqnarray}
where $\ep{a} = \exp(2\pi i a/p)$ and $G_p = \sum_{a (p)} \js{a} \ep{a}$, which equals $\sqrt{p}$
for $p \equiv 1 (4)$ and $i\sqrt{p}$ for $p \equiv 3 (4)$. See,
for example, \cite{BEW}.

For the curve $y^2 = f_E(x) = x^3-ax-b$, $a_E(p) = -\sum_{x (p)}
\js{f_E(x)}$. We use \eqref{eq:formulafromBEW} to rewrite $a_E(p)$ as
\begin{eqnarray}\label{eqatpexpansion}
a_E(p)\ =\ -G_p^{-1} \sum_{x (p)} \sum_{c=1}^p \js{c} \ep{c f_E(x)}.
\end{eqnarray}
We take the complex conjugate, which on the RHS introduces a minus
sign into the exponential and sends $G_p$ to $\overline{G_p}$, and
has no effect on the LHS (which is real). The sum becomes
\begin{eqnarray}
S &\ =\ & (G_p \overline{G_p})^{-1} \sum_{a=0}^{p-1}
\sum_{b=0}^{p-1} \prod_{i=1}^{2} \sum_{x_i=0}^{p-1}
\sum_{c_i=0}^{p-1} \js{c_i} \ep{(-1)^{i+1}(c_i x_i^3 - c_i ax_i -
c_i b)} \nonumber\\ & = & \frac{1}{p} \sum_{x_1, c_1=0}^{p-1}
\sum_{x_2, c_2=0}^{p-1} \js{c_1 c_2} \ep{c_1 x_1^3 - c_2 x_2^3}
\sum_{a=0}^{p-1} \ep{-(c_1 x_1 - c_2 x_2)a}  \nonumber\\ & & \ \ \
\cdot \sum_{b=0}^{p-1}\ep{-(c_1 - c_2)b}.
\end{eqnarray}
The $b$-sum vanishes unless $p | (c_1 - c_2)$, which only happens
if $c_1 = c_2 = c$. The $a$-sum vanishes unless $p | (c x_1 - c
x_2)$. As $c \not\equiv 0 (p)$ (we have the factor $\js{c}$) this
forces $x_1 = x_2 = x$. As $c$ is non-zero, $\js{c^2} = 1$, the
first exponential factor is $1$, and the sums collapse to
\begin{eqnarray}
S &\ =\ & \frac{1}{p} \sum_{c=1}^{p-1} 1 \sum_{x=0}^{p-1} 1
\sum_{a=0}^{p-1} 1 \sum_{b=0}^{p-1} 1 \nonumber\\ & = &
\frac{1}{p} (p-1) \cdot p \cdot p \cdot p = p^3 - p^2.
\end{eqnarray}

\begin{rek} We sketch an alternate proof for $S_1(R)$. We have \bea S_1(R) \ = \ \sum_{a\bmod p}\ \sum_{b \bmod p}\ \sum_{x \bmod p}\ \sum_{y\bmod p} \js{x^3-ax-b} \js{y^3-ay-b}.\ \ \ \eea We use the following result: \bea \mathcal{R} & \ = \ & \sum_{n \bmod p} \js{n+c_1} \js{n+c_2} \nonumber\\ & \ = \ & \sum_{n \bmod p} \js{n^2 + n(c_2-c_1)}\nonumber\\ & \ = \ & \sum_{n \bmod p} \js{n^2 + \alpha n (c_2-c_1)} \eea for any $\alpha \not\equiv 0 \bmod p$. Thus \be (p-1)\mathcal{R}\ =\ \sum_{\alpha \not\equiv 0 \bmod p} \sum_{n \bmod p} \js{n^2+\alpha n (c_2-c_1)} \ = \ -(p-1), \ee so $\mathcal{R} = -1$. Thus \be \twocase{\sum_{n\bmod p} \js{n+c_1} \js{n+c_2} \ = \ }{p-1}{if $c_1 \equiv c_2 \bmod p$}{-1}{otherwise.} \ee We rewrite our sum (replacing $a$ with $-a$ and $b$ with $-b$) as \be S_1(R) \ = \ \sum_{a\bmod p}\ \sum_{x \bmod p} \ \sum_{y \bmod p} \left[ \sum_{b \bmod p} \js{b+(x^3+ax)} \js{b+(y^3+ay)}\right]. \ee When is $x^3+ax \equiv y^3 + ay \bmod p$? This is always true if $x = y$ and $a$ is arbitrary, which gives a contribution of $p\cdot p \cdot (p-1)$. If $x\neq y$ (which happens $p^2-p$ times), there is a unique value of $a$ that works, namely $-(x^3-y^3)/(x-y)$. For this special $a$ the contribution is $(p^2-p) \cdot 1 \cdot (p-1)$, and for the other $a$ the contribution is $(p^2-p) \cdot (p-1) \cdot (-1)$. Adding yields $p^3-p^2$.
\end{rek}

\end{document}